\theoremstyle{definition}
\newtheorem{theorem}{Theorem}[section]
\newtheorem{lemma}[theorem]{Lemma}
\numberwithin{equation}{section}
\numberwithin{figure}{section}
\numberwithin{table}{section}
\newtheorem{assumption}[theorem]{Assumption}
\newtheorem{Example}[theorem]{Example}
\newtheorem{Algorithm}[theorem]{Algorithm}
\titleformat{\section}[block]{\bfseries\filcenter}{\thesection}{5pt}{}
\titleformat{\subsection}[hang]{\bfseries}{\thesubsection}{5pt}{}
\begin{document}
\include{macros}

\pagestyle{plain}
\setlength\lineskiplimit{4pt}
\setlength\lineskip{4pt}
\setlength\parskip{2pt}

\begingroup

\title{A novel shape optimization approach for source identification in elliptic equations}
\author{Wei Gong and Ziyi Zhang}
\thanks{$^*$School of Mathematical Sciences, University of  Chinese Academy of Sciences \& LSEC, Institute of Computational Mathematics, Academy of Mathematics and Systems Science, Chinese Academy of Sciences, Beijing 100190, China. Email: wgong@lsec.cc.ac.cn; zhangziyi21@mails.ucas.ac.cn. The authors acknowledge the support from the National Key Research and
Development Program of China (project no. 2022YFA1004402) and
the National Natural Science Foundation of China (project no. 12071468).}

\begin{abstract}
In this paper, we propose a novel shape optimization approach for the source identification of elliptic equations. This identification problem arises from two application backgrounds: actuator placement in PDE-constrained optimal controls and the regularized least-squares formulation of source identifications. The optimization problem seeks both the source strength and its support. By eliminating the variable associated with the source strength, we reduce the problem to a shape optimization problem for a coupled elliptic system, known as the first-order optimality system. As a model problem, we derive the shape derivative for the regularized least-squares formulation of the inverse source problem and propose a gradient descent shape optimization algorithm, implemented using the level-set method. Several numerical experiments are presented to demonstrate the efficiency of our proposed algorithms.  
\end{abstract}

\date{Modified: \today}

\maketitle

{\bf Keywords}: source identification, elliptic equation, shape optimization, shape derivative

\section{Introduction}
\setcounter{equation}{0}
In this paper we are interested in the source identification of the following elliptic equations
\begin{equation}\label{source_id}
\left\{
\begin{aligned}
-\Delta u+u&=q(x){\chi}_{\omega} \quad &\mbox{in}\ \Omega,\\
\partial_nu&=u_n \quad &\mbox{on}\ \Gamma,\\
u&=u_d \quad &\mbox{on}\ \Gamma,
\end{aligned}
\right.
\end{equation}
where $\Omega\subset\mathbb{R}^d$ ($d=2,3$) is an open bounded domain with Lipschitz boundary $\Gamma:=\partial\Omega$, $u_n,u_d\in L^2(\Gamma)$ are given functions, $\omega\subset\Omega$ is the support of the source with $q(x)$ its strength, which are both unknown. The aim of this paper is to recover both the support $\omega$ and the strength $q$ from the Cauchy data. 
These problems arise in many important applications, among which we take the actuator placement in PDE-constrained controls and the inverse source problem as examples. 

Optimal actuator placement is a pervasive challenge in PDE-constrained control problems. There are primarily three types of control device distributions in applications: the distributed type, where actuators operate within a subdomain $\omega \subset \Omega$; the boundary type, where actuators are placed on parts of the boundary $\partial \Omega$; and pointwise sources. The selection of actuator placement algorithms is highly dependent on the specific application context. 

One approach for designing open-loop control actuators is to determine both the actuator locations and the corresponding optimal controls simultaneously. In \cite{Stadler2009}, the authors proposed using the $L^1$-sparse control method to optimize the placement of control devices. Here, the support of the sparse control identifies the device placement, while the control profiles determine the strength. An alternative method employs measure-valued sparse controls, as discussed in \cite{ClasonKunisch12}. Another principle for actuator design involves optimizing stabilization or controllability through the appropriate choice of controllers. This method is detailed in the references cited in \cite{Morris2018}, where actuators and sensors are positioned to maximize the controllability and observability of the least controllable and observable state, as outlined in \cite{KubruslyMalebranche1985}. In \cite{MunchPeriago2011}, the authors investigated a homogenized version of the optimal placement of internal null controllers with minimal $L^2$-norm for the 1D heat equation problem, considering fixed initial data and imposing a measure constraint on the support without any topological assumptions regarding the number of connected components. This approach was extended in \cite{MunchPeriago2013} to study the numerical approximation of bang-bang controls for the heat equation. In \cite{PrivatTrelatZuazua2017}, the moment method was utilized for actuator design in parabolic distributed parameter systems. The authors formulated a spectral optimal design problem aimed at maximizing a criterion corresponding to an average over random initial data of the largest $L^2$-energy of controllers. \cite{KaliseKunischSturm2018} considered two types of actuator design problems for parabolic equations, based on shape and topology calculus. The first problem determines the best actuators for a given initial condition, while the second identifies optimal actuators for all initial conditions not exceeding a chosen norm. The authors developed a numerical algorithm for optimal actuator design using shape and topological sensitivities of these cost functionals along with a level-set method. For further results on stabilization or controllability-based approaches, see \cite{VaidyaRajaram2012}.

On the other hand, source identifications of the form (\ref{source_id}) in the context of inverse problems have also been extensively studied. A significant application is bioluminescence tomography (BLT) (cf. \cite{ChengGongHan, ChengGongHanZheng, DingGongLiuLo, HanCongWang}). To the best of our knowledge, most existing results focus on the identification of point sources (\cite{Andrle, CasasKunisch2019, BadiaHamdi2005, MongeZuazua}) or distributed sources with known supports, where the problem reduces to inverting for the source strength (\cite{ChengGongHanZheng, HanCongWang}). For source terms in the form of a characteristic function, i.e., $\chi_\omega$ for some $\omega \subset \Omega$, see \cite{HettlichRundell1996} for uniqueness and recovery algorithms. In \cite{MongeZuazua}, the authors address the recovery of sparse initial conditions in diffusion-advection equations by a final state distribution, constituted by a finite combination of Kronecker deltas, identifying their locations and masses. In \cite{DingGongLiuLo}, the authors demonstrate the theoretical uniqueness of the BLT problem where the light sources are shaped as $C^2$ domains, polyhedral, or corona-shaped.

PDE-constrained shape and topology optimization is a classical research topic that has seen significant development over the past few decades. For a comprehensive review of recent advancements, we refer to \cite{Allaire, Sokolowski}. The shape optimization approach has been employed to recover obstacles in inverse scattering problems (cf. \cite{Hettlich,Kirsch}) and to identify the support of the source in both steady and unsteady diffusion and subdiffusion problems (cf. \cite{HettlichRundell2001, HettlichRundell1996, HuZhu}). Additionally, its application in electrical impedance tomography is documented in \cite{Afraites, HintermullerLaurain}. However, to the best of our knowledge, there are no existing results on its application to source identification problems of the form (\ref{source_id}).

The aim of this paper is to simultaneously identify the source support and intensity in the form (\ref{source_id}). Using Cauchy data, we formulate an optimization problem employing the least-squares method and Tikhonov regularization, which seeks both the source strength and its support. When the support $\omega$ is known, the optimization problem simplifies to a classical inverse source problem. In this case, the first-order optimality condition links the unknown source with the adjoint, allowing us to eliminate the variable associated with the source strength. This reduction transforms the optimization problem into a shape optimization problem for a coupled elliptic system, referred to as the first-order optimality system. We remark that our method is inspired by \cite{ANSS}, where the distributed and shape controls of  Stokes equations were considered. As a model problem, we derive the shape derivative for the regularized least-squares formulation of the inverse source problem and propose a gradient descent shape optimization algorithm, implemented via the level-set method. Several numerical experiments are presented to demonstrate the efficiency of our proposed algorithms, covering scenarios with smooth or polygonal support, constant or varying intensity, and simply or multiply connected support.

The remainder of this paper is organized as follows: In Section 2, we present the motivation for addressing the source identification problem, using the control actuator placement and BLT problems as examples. Section 3 discusses a model inverse source problem, formulates the shape optimization problem, and proves the existence of optimal shapes. In Section 4, we demonstrate the shape differentiability of the state equation with respect to the domain and conduct shape sensitivity analysis. Section 5 is dedicated to extensive numerical experiments that illustrate the effectiveness of the proposed algorithm. Finally, in the last section, we draw conclusions and suggest potential future work. Throughout this paper, we use standard notations for Sobelev spaces and their associated norms. 



\section{Motivations for source identification}
\setcounter{equation}{0}

In this section, we present the motivation for studying inverse source problems. Among the extensive applications, we highlight those arising from actuator placement in PDE-constrained optimal control and bioluminescence tomography (BLT) problems.

\subsection{Actuator placement in optimal controls}
For a given measurable subset $\omega \subset \Omega$, representing the location of the control actuator, we can formulate the following optimal control problem: 
\begin{eqnarray}
\min\limits_{q\in U_{ad}\subset L^2(\omega)} \ J_{\omega}(u,q)\label{OCP_obj_abs}
\end{eqnarray}
subject to 
\begin{equation}\label{OCP_elliptic}
\left\{
\begin{aligned}
-\Delta u+u&=q(x){\chi}_{\omega} \quad &\mbox{in}\ \Omega,\\
\partial_nu&=u_n \quad  &\mbox{on}\ \Gamma,
\end{aligned}
\right.
\end{equation}
where $u_n\in L^2(\Gamma)$ is a given function. 

To ensure the well-posedness of the optimization problem (\ref{OCP_obj_abs})-(\ref{OCP_elliptic}) we first state some assumptions about the objective functional $J_\omega(u,q)$. 
\begin{assumption}\label{Ass:1}
	We make the following assumptions:
	\begin{enumerate}
		\item Either $J_{\omega}(u,q)\rightarrow +\infty$ as $\|q\|_{0,\omega}\rightarrow +\infty$, or $U_{ad}$ is a bounded, closed and convex subset of $L^2(\omega)$;
		
		\item $J_{\omega}(u,q)$ is sequentially weakly lower semi-continuous with respect to $u$ and $q$. 
	\end{enumerate}
\end{assumption}
A commonly used candidate for the objective functional $J_{\omega}(u,q)$ in the literature is of the tracking type:
\begin{eqnarray}
J_{\omega}(u,q):=\frac{1}{2}\|u-u_d\|_{0,\Gamma}^2+\frac{\alpha}{2}\|q\|_{0,\omega}^2,\nonumber
\end{eqnarray}
where $u_d$ is a given target state defined on $\Gamma$, and the second term with $\alpha\geq 0$ is a regularization or control cost term. If $\alpha=0$ we require that $U_{ad}$ is bounded. 

Under Assumption \ref{Ass:1} it is well-known that the above optimization problem admits a unique solution $q_{\omega}\in U_{ad}$ (cf. \cite{HPUU2009}) with associated state $u_{\omega}$, depending on $\omega$, such that
\begin{eqnarray}
J_{\omega}(u_{\omega},q_{\omega})=\min\limits_{q\in U_{ad}} \ J_{\omega}(u,q).\nonumber
\end{eqnarray}
Then we can calculate the Gateaux derivative of $J_{\omega}(u,q)$ as follows:
\begin{eqnarray}
J'_{\omega}(u_{\omega},q_{\omega})v=\partial_qJ_{\omega}(u_{\omega},q_{\omega} )v+(p_{\omega},v)_{\omega}\quad \forall v\in L^2(\omega),
\end{eqnarray}
where $p_{\omega}$ is the adjoint state solving
\begin{equation}\label{OPT_adjoint}
\left\{
\begin{aligned}
-\Delta p_{\omega}+p_{\omega}&=\partial_uJ_{\omega}(u_{\omega},q_{\omega}) \quad &\mbox{in}\ \Omega,\\
\partial_np_{\omega}&=0 \quad &\mbox{on}\ \Gamma,
\end{aligned}
\right.
\end{equation}
if the integral of $u$ in $J_{\omega}(u,q)$ is of distributed type. Therefore, we have the following first order optimality condition for the optimization problem
\begin{eqnarray}
\partial_qJ_{\omega}(u_{\omega},q_{\omega})(v-q_{\omega})+(p_{\omega},v-q_{\omega})_{\omega}\geq 0\quad \forall v\in U_{ad}.
\end{eqnarray}

However, in many cases, the control actuator location $\omega \subset \Omega$ needs to be determined, leading to the simultaneous identification of the actuator location and the optimal control. As a model problem, we consider the following optimization problem:
\begin{eqnarray}
\min\limits_{\omega\subset\Omega,\ q_\omega\in U_{ad}}\ J_{\omega}(u_{\omega},q_{\omega})\label{OCP_obj}
\end{eqnarray}
subject to (\ref{OCP_elliptic}) and the volume constraint
\begin{eqnarray}
|\omega|= \gamma_0,\quad 0<\gamma_0 <|\Omega|,\label{OCP_constraint}
\end{eqnarray}
where $\omega\subset\Omega$ denotes the location of the sensor, and $q(x)$ denotes the control function acting on $\omega$. Note that $\omega$ may not be connected. In this formulation  the sensor locations $\omega$ are the optimization variables, while the control functions $q(x)$ depend implicitly on $\omega$.


\subsection{Source identification in BLT problems}
In this subsection, we consider the identification of sources for elliptic equations, inspired by applications in bioluminescence tomography (BLT). BLT is a whole-body imaging technique that captures light produced within the body and transmitted through tissue, utilizing reporter genes that encode fluorescent or bioluminescent proteins. The objective is to reconstruct bioluminescence signals, which are internal light sources, from external measurements of the Cauchy data. For further details on applications, background, and mathematical derivations, we refer to \cite{ChengGongHan,DingGongLiuLo,HanCongWang}.

The propagation of light through a random medium is described by the radiative transfer equation, which is typically approximated by a diffusion equation. After the injection of luciferin, the bioluminescence signal varies and eventually peaks. Measurements are generally taken at peak emission, as the internal bioluminescence distribution induced by reporter genes remains relatively stable at this time, allowing us to neglect time dependence. Discarding all time-dependent terms, the stationary BLT model is given by:
\begin{equation}
\left\{
\begin{aligned}
-\nabla\cdot (D\nabla u)+\mu_0u&=q \quad &\mbox{in}\ \Omega,\\
u+2D\partial_n u&=g_1 \quad &\mbox{on}\ \Gamma,\\
D\partial_n u=g_2 \ \mbox{or} \ u&=g_1-2g_2 \quad &\mbox{on}\ \Gamma,
\end{aligned}
\right.
\end{equation}
where $g_1$ is a given function and $g_2$ is the measurement on the boundary. Our aim is to recover the source $q$ from the overdetermined elliptic system.  

In many applications, the source $q$ has compact support within $\Omega$ and takes the form 
\begin{eqnarray}
q(x)=q_0(x)\chi_{\omega},\nonumber
\end{eqnarray}
where $\omega\subset\Omega$ is the support of the source and $q_0$ is the strength, both of which are unknown. Using standard Tikhonov regularization, we can formulate the following inversion problem:
\begin{eqnarray}
\min\limits_{\omega\subset \Omega,\ q_0\in L^2(\omega)}\ J(\omega,u,q_0)=\frac{1}{2}\|u-g_1+2g_2\|_{0,\Gamma}^2+\frac{\alpha}{2}\|q_0\|^2_{0,\omega}
\end{eqnarray}
subject to 
\begin{equation}
\left\{
\begin{aligned}
-\nabla\cdot (D\nabla u)+\mu_0u&=q_0\chi_{\omega} \quad &\mbox{in}\ \Omega,\\
u+2D\partial_n u&=g_1 \quad &\mbox{on}\ \Gamma,
\end{aligned}
\right.
\end{equation}
where $\alpha>0$ is a regularization parameter. For accurate identification, we can consider multiple measurements. Thus, the model is extended to:
\begin{eqnarray}
\min\limits_{\omega\subset \Omega,\ q_0\in L^2(\omega)}\ J(\omega,u,q_0)=\frac{1}{2m}\sum\limits_{i=1}^m\|u^i-g_1^i+2g_2^i\|_{0,\Gamma}^2+\frac{\alpha}{2}\|q_0\|^2_{0,\omega}
\end{eqnarray}
subject to 
\begin{eqnarray}
\left\{
\begin{aligned}
-\nabla\cdot (D\nabla u^i)+\mu_0u^i&=q_0\chi_{\omega} \quad &\mbox{in}\ \Omega,\\
u^i+2D\partial_n u^i&=g_1^i \quad &\mbox{on}\ \Gamma.
\end{aligned}
\right.
\end{eqnarray}
We note that a similar identification problem was studied in \cite{HintermullerLaurain} where the unknown is the conductivity of the elliptic equations. 

\section{A model source identification problem}
In this section, we consider a general model for inverse source problems governed by the following elliptic equation:
 \begin{equation}\label{Poisson}
\left\{
\begin{aligned}
-\Delta u+u&= f+q(x)\chi_\omega  \quad&\mbox{in}\ \Omega,\\
u&=u_d \quad&\mbox{on}\ \Gamma,\\
\partial_nu&=u_n \quad &\mbox{on}\ \Gamma,
\end{aligned}
\right.
\end{equation}
where $\omega \subset \Omega$ is the support of the source and $q(x)$ is the intensity, both of which are unknown and need to be recovered. For given $\omega$ and boundary data $u_d$, $u_n$, the identification of the unknown source $q(x)$ is relatively straightforward. We can use the least-squares method to formulate the following optimization problem:
\begin{eqnarray}
\min\limits_{q\in L^2(\omega)}\ J_0(u,q)=\frac{1}{2}\|u-u_d\|_{0,\Gamma}^2+\frac{\alpha}{2}\|q\|_{0,\omega}^2\label{Poisson_control}
\end{eqnarray}
subject to
\begin{equation}\label{Poisson_Neumann}
\left\{
\begin{aligned}
-\Delta u+u&= f+q(x)\chi_\omega  \quad&\mbox{in}\ \Omega,\\
\partial_nu&=u_n \quad &\mbox{on}\ \Gamma,
\end{aligned}
\right.
\end{equation}
where $\alpha>0$ is the regularization parameter. This approach has been extensively studied in the literature (cf. \cite{ChengGongHanZheng,HanCongWang}). Introduce the following adjoint equation: find $p\in H^1(\Omega)$ such that
\begin{equation}\label{Poisson_adjoint}
\left\{
\begin{aligned}
-\Delta p+p&= 0 \quad&\mbox{in}\ \Omega,\\
\partial_np&=u-u_d \quad &\mbox{on}\ \Gamma.
\end{aligned}
\right.
\end{equation}
The first order optimality condition then implies 
\begin{eqnarray}
\nabla J_0(u,q)=\alpha q+p=0\quad \mbox{in}\  \omega,\label{opt} 
\end{eqnarray}
which in turn gives $q=-\frac{1}{\alpha}p|_{\omega}$. The well-posedness analysis, finite element approximation, and variations of the above formulation have been extensively studied in, for example, \cite{ChengGongHanZheng, HanCongWang}.

In this note, we intend to identify both $\omega$ and $q(x)$. Following a similar approach, we formulate the optimization problem as follows:
\begin{eqnarray}
\min\limits_{\omega\subset\Omega,\ q\in L^2(\omega)}\ J_0(\omega,u,q)=\frac{1}{2}\|u-u_d\|_{0,\Gamma}^2+\frac{\alpha}{2}\|q\|_{0,\omega}^2 \quad \mbox{subject\ to}\ \eqref{Poisson_Neumann}.\label{shape_control_obj}
\end{eqnarray}
In contrast to the problem \eqref{Poisson_control}-\eqref{Poisson_Neumann}, here both $\omega$ and $q$ are unknown. However, assuming $\omega$ is given, we are led to the standard inverse source problem \eqref{Poisson_control}-\eqref{Poisson_Neumann}. Replacing the source $q$ by $-\frac{1}{\alpha}p$, we transform the optimization problem \eqref{shape_control_obj} into the following shape optimization problem:
\begin{eqnarray}
\min\limits_{\omega\subset\Omega} \ J_0(\omega,u,p)=\frac{1}{2}\|u-u_d\|_{0,\Gamma}^2+\frac{1}{2\alpha}\|p\|_{0,\omega}^2 \label{shape_obj}
\end{eqnarray}
subject to
\begin{equation}\label{shape_state}
\left\{
\begin{aligned}
-\Delta u+u&= f-\frac{1}{\alpha}p\chi_\omega  \quad&\mbox{in}\ \Omega,\\
\partial_nu&=u_n \quad &\mbox{on}\ \Gamma,\\
-\Delta p+p&= 0 \quad&\mbox{in}\ \Omega,\\
\partial_np&=u-u_d \quad &\mbox{on}\ \Gamma.
\end{aligned}
\right.
\end{equation}
Thus, we eliminate the unknown source $q$ and reformulate the optimization problem over $(q, \omega)$ with the state $u$ into an optimization problem over $\omega$ with the state $(u, p)$. The remaining task is to study the well-posedness and derive the shape sensitivity analysis for the above shape optimization problem.

The weak formulation of the coupled system (\ref{shape_state}) is: find $(u,p)\in H^1(\Omega)\times H^1(\Omega)$ such that
\begin{equation}\label{shape_state_weak}
\left\{
    \begin{aligned}
        &\int_{\Omega}(\nabla u \cdot \nabla v+ uv )\mathrm{dx} + \frac{1}{\alpha} \int_{\omega}pv \mathrm{dx} = \int_{\Omega}fv \mathrm{dx} + \int_{\Gamma}u_n v \mathrm{ds}&\quad &\forall v\in H^1(\Omega),\\
        &\int_{\Omega}(\nabla p \cdot \nabla w+ pw) \mathrm{dx} = \int_{\Gamma}(u-u_d) w \mathrm{ds}&\quad &\forall w\in H^1(\Omega).
    \end{aligned}
    \right.
\end{equation}
Since this system represents the first-order optimality conditions of the optimization problem \eqref{Poisson_control}-\eqref{Poisson_Neumann}, and given that the optimization problem is strictly convex and admits a unique solution, we conclude that the system \eqref{shape_state_weak} also admits a unique solution pair  $(u,p)\in H^1(\Omega)\times H^1(\Omega)$.

To derive a stability result for the coupled system \eqref{shape_state}, which will be used to prove the differentiability with respect to the domain $\omega$, we rescale the system and write the variational formulation as a symmetric saddle point system (cf. \cite{Gaspoz} for a similar approach). Define
\begin{eqnarray}
\bm{X}:=H^1(\Omega)\times H^1(\Omega)\quad\mbox{with}\ \ \|\bm{v}\|:=(\|v_1\|_{1,\Omega}^2+\|v_2\|_{1,\Omega}^2)^{\frac{1}{ 2}} \quad \forall \bm{v}=(v_1,v_2)\in \bm{X}.\nonumber
\end{eqnarray}
Introduce the bilinear form $\bm{b}:\bm{X}\times \bm{X}\rightarrow\mathbb{R}$ defined as 
\begin{eqnarray}
\bm{b}(\bm{v},\bm{w}):=\bm{a}(\bm{v},\bm{w})+\frac{1}{\sqrt{\alpha}}\bm{c}(\bm{v},\bm{w}),\nonumber
\end{eqnarray}
where
\begin{equation}
\begin{aligned}
&\bm{a}(\bm{v},\bm{w}):=a(v_1,w_2)+a(w_1,v_2) &\quad &\forall \bm{v},\bm{w}\in \bm{X},\nonumber\\
&a(u,v):=(\nabla u,\nabla v)+(u,v) &\quad &\forall u,v\in H^1(\Omega),\nonumber\\
&\bm{c}(\bm{v},\bm{w}):=(v_2,w_2)_\omega-(v_1,w_1)_\Gamma&\quad &\forall \bm{v},\bm{w}\in \bm{X}.\nonumber
\end{aligned}
\end{equation}
Then the weak formulation (\ref{shape_state_weak}) can be rewritten in the following compact form:
\begin{eqnarray}
\mbox{find}\ \bm{x}\in \bm{X}\quad\mbox{s.t.}\ \bm{b}(\bm{x},\bm{\phi})=-\frac{1}{\sqrt{\alpha}}(u_d,\phi_1)_\Gamma+(f,\phi_2)+(u_n,\phi_2)_\Gamma\quad\forall \bm{\phi}\in \bm{X},\label{compact_form}
\end{eqnarray}
where $\bm{x}=(x_1,x_2)\in \bm{X}$ with $x_1=u$ and $x_2=\frac{1}{\sqrt{\alpha}}p$ solving (\ref{shape_state_weak}).  

For each $u\in H^1(\Omega)$, the trace theorem gives us  $\|u\|_{0,\Gamma}\leq M_I\|u\|_{1,\Omega}$ for some constant $M_I>0$. For the characteristic function $\chi_\omega:L^2(\omega)\rightarrow H^1(\Omega)^*$, we assume that there exists a constant $M_C>0$ such that $\|\chi_\omega q\|_{H^1(\Omega)^*}\leq M_C\|q\|_{0,\omega}$. Let $M=\max\{M_I,M_C\}$, we define the semi-norm and $\alpha$-dependent norm as follows (cf. \cite{Gaspoz}):
\begin{equation}
\begin{aligned}
&|\bm{v}|:=(\|v_1\|_{0,\Gamma}^2+\|v_2\|_{0,\omega}^2)^{\frac{1}{2}} &\quad &\forall \bm{v}\in \bm{X},\nonumber\\
&\|\bm{\phi}\|_\alpha:=\|\bm{\phi}\|+\frac{M}{\sqrt{\alpha}}|\bm{\phi}| &\quad &\forall \bm{\phi}\in \bm{X}.\nonumber
\end{aligned}
\end{equation}
It is clear that $|\bm{v}|\leq M\|\bm{v}\|$ for any $\bm{v}\in \bm{X}$ (cf. \cite[eq. (2.16)]{Gaspoz}). One can then prove that the bilinear form $\bm{b}$ satisfies the following continuity and inf-sup properties (cf. \cite[Theorem 2.1]{Gaspoz}): 
\begin{equation}
\begin{aligned}
&|\bm{b}(\bm{v},\bm{\phi})|\leq \|\bm{v}\|\|\bm{\phi}\|+\frac{M}{\sqrt{\alpha}}\|\bm{v}\||\bm{\phi}|\leq \|\bm{v}\|\|\bm{\phi}\|_\alpha &\quad &\forall \bm{v},\bm{\phi}\in \bm{X},\\
&\sup\limits_{\bm{v}\in \bm{X}}\bm{b}(\bm{v},\bm{\phi})\geq \frac{1}{\kappa}\|\bm{v}\|\|\bm{\phi}\|_\alpha &\quad &\forall 0\neq \bm{\phi}\in \bm{X},
\end{aligned}
\end{equation}
where $\kappa =\frac{1+2L}{1+L}(1+M(1+\frac{2M}{\sqrt{\alpha}}))$ and $L=\frac{M}{\sqrt{\alpha}}$. By combining these properties of $\bm{b}$ with the boundedness of the right-hand side in (\ref{compact_form}) we obtain
\begin{eqnarray}
\|u\|_{1,\Omega}+\frac{1}{\sqrt{\alpha}}\|p\|_{1,\Omega}\leq C(\|u_d\|_{0,\Gamma}+\|u_n\|_{0,\Gamma}+\|f\|_{0,\Omega}),\label{up_bounded}
\end{eqnarray}
where the constant $C>0$ may depend on $\alpha$, $L$ and $M$.

It is well-known that the shape optimization problems are generally severally ill-posed, that is, solutions may not exist, or if they do, they may not be unique (cf. \cite{Allaire,AllaireHenrot}). A common remedy method involves adding a regularization term to the objective functional. To implement this, we introduce the function
\begin{equation}\label{peri}
    \mathcal{P}_{\Omega}(\omega): = \sup \{ \int_{\omega}\mbox{div} \phi \ \text{dx}: \ \phi \in \mathcal{D}^1(\Omega, \mathbb{R}^d), \ \max\limits_{x \in \Omega} ||\phi(x)|| \leq 1 \},
\end{equation}
which acts as the perimeter constraint of $\omega$ in $\Omega$. For $\lambda>0$, consider the new optimization problem
\begin{equation}\label{OPT_obj}
    \min\limits_{\omega \subset\Omega}\ J({\omega},u,p) :=J_0({\omega},u,p) +\lambda \mathcal{P}_{\Omega}(\omega)\quad\mbox{subject\ to}\ \ (\ref{shape_state}).
\end{equation}
This formulation ensures the existence of a solution.
\begin{theorem}\label{exist}
The shape identification problem (\ref{OPT_obj}) admits a solution.
\end{theorem}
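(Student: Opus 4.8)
The plan is to establish existence via the direct method of the calculus of variations, exploiting the perimeter regularization term $\lambda \mathcal{P}_\Omega(\omega)$ to obtain the compactness that is otherwise missing in shape optimization. Since characteristic functions $\chi_\omega$ are the natural unknowns, I would reformulate the problem in terms of the admissible class of sets of finite perimeter, identifying each $\omega$ with $\chi_\omega \in BV(\Omega) \cap \{0,1\}$. The functional $J(\omega,u,p) = J_0(\omega,u,p) + \lambda \mathcal{P}_\Omega(\omega)$ is bounded below by zero, so a minimizing sequence $\{\omega_n\}$ exists with $J(\omega_n, u_n, p_n) \to \inf J$. From the definition (\ref{peri}), $\mathcal{P}_\Omega(\omega_n)$ coincides with the total variation $|D\chi_{\omega_n}|(\Omega)$, and along a minimizing sequence this stays uniformly bounded; combined with the trivial bound $\|\chi_{\omega_n}\|_{L^1(\Omega)} \le |\Omega|$, we obtain $\{\chi_{\omega_n}\}$ bounded in $BV(\Omega)$.

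First I would invoke the compact embedding $BV(\Omega) \hookrightarrow\hookrightarrow L^1(\Omega)$ to extract a subsequence (not relabeled) with $\chi_{\omega_n} \to \chi_{\omega^*}$ strongly in $L^1(\Omega)$ and pointwise a.e.; since the limit takes values in $\{0,1\}$ a.e., it is again a characteristic function of some measurable set $\omega^* \subset \Omega$ of finite perimeter. Next I would pass to the limit in the state equations. Using the uniform a priori bound (\ref{up_bounded}), the sequence $(u_n, p_n)$ solving the coupled system (\ref{shape_state_weak}) with $\omega = \omega_n$ is bounded in $\bm X = H^1(\Omega) \times H^1(\Omega)$, so a further subsequence converges weakly to some $(u^*, p^*) \in \bm X$. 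The only $\omega$-dependent term in the weak formulation is $\frac{1}{\alpha}\int_{\omega_n} p_n v\,\mathrm{dx} = \frac{1}{\alpha}\int_\Omega \chi_{\omega_n} p_n v\,\mathrm{dx}$; here the strong $L^1$ (hence, up to subsequence, a.e. and dominated) convergence $\chi_{\omega_n}\to\chi_{\omega^*}$ together with the weak $H^1$ — hence strong $L^2$ by Rellich — convergence $p_n \to p^*$ lets me pass to the limit in this product, identifying $(u^*, p^*)$ as the solution of (\ref{shape_state_weak}) associated with $\omega^*$.

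Finally I would verify that $\omega^*$ is a minimizer by lower semicontinuity of the objective. The fidelity term $\frac{1}{2}\|u_n - u_d\|_{0,\Gamma}^2$ passes using the compactness of the trace operator $H^1(\Omega) \to L^2(\Gamma)$, so $u_n \to u^*$ strongly in $L^2(\Gamma)$ and the term converges. The regularized source-norm term $\frac{1}{2\alpha}\|p_n\|_{0,\omega_n}^2 = \frac{1}{2\alpha}\int_\Omega \chi_{\omega_n} p_n^2\,\mathrm{dx}$ converges by the same product argument (now with $p_n \to p^*$ strongly in $L^2$). The perimeter term is lower semicontinuous with respect to $L^1$ convergence, a classical property equivalent to $\liminf_n \mathcal{P}_\Omega(\omega_n) \ge \mathcal{P}_\Omega(\omega^*)$, which follows directly from the supremum definition (\ref{peri}): for any admissible test field $\phi$, $\int_{\omega^*}\mathrm{div}\,\phi\,\mathrm{dx} = \lim_n \int_{\omega_n}\mathrm{div}\,\phi\,\mathrm{dx} \le \liminf_n \mathcal{P}_\Omega(\omega_n)$, and taking the supremum over $\phi$ gives the claim. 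Assembling these pieces yields $J(\omega^*, u^*, p^*) \le \liminf_n J(\omega_n, u_n, p_n) = \inf J$, so $\omega^*$ is optimal.

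The main obstacle I anticipate is the limit passage in the $\omega$-dependent coupling term $\int_{\omega_n} p_n v\,\mathrm{dx}$: it is a product of a weakly convergent factor ($\chi_{\omega_n}$, only $L^1$-strong but merely $L^\infty$-weak-$*$) and another convergent factor ($p_n$), and one must be careful that neither factor converges merely weakly, since a product of two weak limits need not converge to the product of limits. The resolution is precisely that the perimeter bound upgrades $\chi_{\omega_n}$ to \emph{strong} $L^1$ convergence while Rellich upgrades $p_n$ to strong $L^2$ convergence, so the pairing is a strong-times-strong (equivalently, $L^1$-times-$L^\infty$ with a.e.\ control) limit and passes without difficulty. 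Verifying this compactness rigorously — and checking that the limiting $\omega^*$ genuinely lies in the admissible class — is where the perimeter regularization does its essential work.
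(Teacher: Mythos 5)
Your proposal is correct and follows essentially the same route as the paper: the direct method, with compactness of the minimizing sequence of characteristic functions supplied by the uniform perimeter bound, continuity of the solution map $\chi\mapsto(u,p)$ to pass to the limit in the coupled state system, and lower semicontinuity of $\mathcal{P}_\Omega$ under $L^1$ convergence. The only difference is one of packaging: the paper cites Sokolowski--Zol\'esio for the compactness of $\{\chi_\omega:\mathcal{P}_\Omega(\omega)\le M\}$ in $L^2(\Omega)$ and proves the continuity of the solution map as a separate Lipschitz-type lemma, whereas you rederive both inline via the $BV\hookrightarrow\hookrightarrow L^1$ embedding and the strong-times-strong product argument.
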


First we introduce the set of characteristic functions (cf. \cite{Sokolowski})
\begin{equation}
   \mbox{Char}(\Omega) = \{\chi \in L^2(\Omega): \ \chi(1- \chi) = 0 \ \  \mbox{a.e. in} \ \Omega \}.\nonumber
\end{equation}
Since for each non-empty subset $\omega\subset \Omega$, $\chi_\omega\in \mbox{ Char}(\Omega)$, and the elliptic system (\ref{shape_state}) admits a unique solution denoted by $(u_\omega,p_\omega)$, we can represent $(u(\chi),p(\chi))$ as the solution to (\ref{shape_state}) with $\chi\in \mbox{ Char}(\Omega)$.
\begin{lemma}
The mapping $\chi \rightarrow (u(\chi),p(\chi))$ is continuous from $\mbox{Char}(\Omega)$ to $H^1(\Omega)\times H^1(\Omega)$.
\end{lemma}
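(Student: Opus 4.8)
The plan is to equip $\mbox{Char}(\Omega)$ with the strong $L^2(\Omega)$ topology and prove sequential continuity; this is the relevant topology for Theorem \ref{exist}, where a minimizing sequence of finite perimeter is precompact in $L^2(\Omega)$. So I would take $\chi_n\to\chi$ in $L^2(\Omega)$ with $\chi_n,\chi\in\mbox{Char}(\Omega)$, and write $(u_n,p_n):=(u(\chi_n),p(\chi_n))$ and $(u,p):=(u(\chi),p(\chi))$, where the $n$-indexed pairs solve \eqref{shape_state_weak} with $\chi_\omega$ replaced by $\chi_n$. The first observation is that the a priori bound \eqref{up_bounded} is uniform in $\chi$: since $\|\chi q\|_{H^1(\Omega)^*}\le\|q\|_{0,\Omega}$ for every $\chi\in\mbox{Char}(\Omega)$, the constant $M_C$—and hence the constant $C$ in \eqref{up_bounded}—may be chosen independent of $\chi$. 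Thus $\|u_n\|_{1,\Omega}+\|p_n\|_{1,\Omega}$ is bounded uniformly in $n$, and along a subsequence $u_n\rightharpoonup u^\ast$ and $p_n\rightharpoonup p^\ast$ weakly in $H^1(\Omega)$.

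Next I would upgrade these weak limits using compactness: by the Rellich--Kondrachov theorem, $u_n\to u^\ast$ and $p_n\to p^\ast$ strongly in $L^2(\Omega)$, and by compactness of the trace $H^1(\Omega)\to L^2(\Gamma)$ on the Lipschitz domain $\Omega$, also $u_n|_\Gamma\to u^\ast|_\Gamma$ strongly in $L^2(\Gamma)$. With these I can pass to the limit in \eqref{shape_state_weak}. Every term except the coupling term $\frac1\alpha\int_\Omega \chi_n p_n v\,\mathrm{dx}$ is linear and passes to the limit at once (the gradient terms by weak $H^1$ convergence, the boundary term in the $p$-equation by the strong trace convergence of $u_n$). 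For the coupling term I split
\[
\int_\Omega \chi_n p_n v\,\mathrm{dx}-\int_\Omega \chi p^\ast v\,\mathrm{dx}=\int_\Omega \chi_n (p_n-p^\ast) v\,\mathrm{dx}+\int_\Omega (\chi_n-\chi) p^\ast v\,\mathrm{dx}.
\]
The first integral is bounded by $\|p_n-p^\ast\|_{0,\Omega}\|v\|_{0,\Omega}$ (using $0\le\chi_n\le1$) and vanishes by strong $L^2$ convergence of $p_n$; the second is bounded by $\|\chi_n-\chi\|_{0,\Omega}\|p^\ast v\|_{0,\Omega}$, and since $d\in\{2,3\}$ the Sobolev embedding $H^1(\Omega)\hookrightarrow L^4(\Omega)$ gives $p^\ast v\in L^2(\Omega)$, so this term vanishes as well. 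Hence $(u^\ast,p^\ast)$ solves \eqref{shape_state_weak} with $\chi_\omega$ replaced by $\chi$; by the uniqueness of its solution noted above, $(u^\ast,p^\ast)=(u,p)$, and as the limit is independent of the subsequence, the whole sequence converges weakly to $(u,p)$.

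Finally, to promote weak to strong $H^1$ convergence, I would test the difference of the two systems against the differences themselves. Subtracting the $p$-equations and testing with $p_n-p$ gives $\|p_n-p\|_{1,\Omega}^2=(u_n-u,\,p_n-p)_\Gamma$; bounding the right-hand side by $\|u_n-u\|_{0,\Gamma}\|p_n-p\|_{0,\Gamma}$ and using the trace inequality yields $\|p_n-p\|_{1,\Omega}\le M_I\|u_n-u\|_{0,\Gamma}\to0$, so $p_n\to p$ in $H^1(\Omega)$. Subtracting the $u$-equations, testing with $u_n-u$, and writing $\chi_n p_n-\chi p=\chi_n(p_n-p)+(\chi_n-\chi)p$ gives
\[
\|u_n-u\|_{1,\Omega}\le \tfrac1\alpha\big(\|p_n-p\|_{0,\Omega}+C\|p\|_{1,\Omega}\|\chi_n-\chi\|_{0,\Omega}\big),
\]
where $H^1(\Omega)\hookrightarrow L^4(\Omega)$ again controls $\|p\,(u_n-u)\|_{0,\Omega}$; the right-hand side tends to zero, so $u_n\to u$ in $H^1(\Omega)$.

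I expect the main obstacle to be precisely the coupling term $\frac1\alpha\int_\Omega\chi_n p_n v\,\mathrm{dx}$, a product of two sequences $\chi_n$ and $p_n$ that a priori converge only in the weak sense. The resolution is that one factor, $p_n$, in fact converges strongly in $L^2(\Omega)$ by Rellich compactness, while the low dimension $d\le 3$ lets the Sobolev embedding absorb the remaining product $(\chi_n-\chi)p^\ast v$. Everything else is a routine weak/strong continuity argument combined with the uniqueness of \eqref{shape_state_weak}.
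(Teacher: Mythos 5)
Your proof is correct, but it takes a genuinely different route from the paper's. The paper treats the difference $(u_1-u_2,p_1-p_2)$ directly: it satisfies the same coupled system with right-hand side $\frac{1}{\alpha}(\chi_2-\chi_1)p_2$, which the embedding $H^1(\Omega)\hookrightarrow L^4(\Omega)$ bounds in $H^1(\Omega)^*$ by $C\|\chi_1-\chi_2\|_{0,\Omega}\|p_2\|_{1,\Omega}$; one application of the saddle-point stability estimate \eqref{up_bounded} then yields the quantitative Lipschitz bound $\|u_1-u_2\|_{1,\Omega}+\frac{1}{\sqrt{\alpha}}\|p_1-p_2\|_{1,\Omega}\leq C\|\chi_1-\chi_2\|_{0,\Omega}$, which is stronger than the sequential continuity you obtain. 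Your compactness--uniqueness--subsequence argument is longer but sound, and the key ingredient is the same $L^4$ embedding used to absorb $(\chi_n-\chi)p$. One structural point in your favor: your final ``test the differences against themselves'' step only closes because you first secured $\|u_n-u\|_{0,\Gamma}\to 0$ via the compact trace; feeding the two decoupled energy estimates into each other without that step would force a smallness condition of the type $M_I^2\leq\alpha$, which is exactly the obstruction the paper's inf-sup-stable formulation of the coupled difference system is designed to avoid. So your detour through weak convergence is not wasted motion --- it is what lets you decouple the two equations --- but the paper's single stability estimate achieves the same end more economically and with a modulus of continuity.
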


\begin{proof}
    Let $\omega_1, \omega_2$ be two measurable subsets of $\Omega$ with corresponding characteristic functions $\chi_1, \chi_2$. For each $\omega_i$, the elliptic system (\ref{shape_state}) admits a unique solution defined as in (\ref{shape_state_weak}), denoted by $(u_i,p_i)$ $(i=1,2)$. Moreover, the difference $(u_1-u_2,p_1-p_2)$ satisfies the following system:
    \begin{eqnarray}\nonumber
    \left\{
    \begin{aligned}
       &\int_{\Omega}(\nabla (u_1-u_2) \cdot \nabla v+ (u_1-u_2)v )\mathrm{dx} + \frac{1}{\alpha} \int_{\Omega}(\chi_1p_1-\chi_2p_2)v \mathrm{dx} =0 &\quad &\forall v\in H^1(\Omega),\\
        &\int_{\Omega}(\nabla (p_1-p_2) \cdot \nabla w+ (p_1-p_2)w) \mathrm{dx} = 0 &\quad &\forall w\in H^1(\Omega),
        \end{aligned}
        \right.
    \end{eqnarray}
    which is equivalent to 
    \begin{eqnarray}\nonumber
    \left\{
    \begin{aligned}
       &\int_{\Omega}(\nabla (u_1-u_2) \cdot \nabla v+ (u_1-u_2)v )\mathrm{dx} + \frac{1}{\alpha} \int_{\Omega}\chi_1(p_1-p_2)v \mathrm{dx} =\frac{1}{\alpha} \int_{\Omega}(\chi_2-\chi_1)p_2v \mathrm{dx},\\
        &\int_{\Omega}(\nabla (p_1-p_2) \cdot \nabla w+ (p_1-p_2)w) \mathrm{dx} = 0
        \end{aligned}
        \right.
    \end{eqnarray}
    for any $(v,w)\in H^1(\Omega)\times H^1(\Omega)$. 
    Since $H^1(\Omega)\hookrightarrow L^4(\Omega)$, we immediately obtain
    \begin{eqnarray}
    \int_{\Omega}(\chi_2-\chi_1)p_2v \mathrm{dx}&\leq& C\|\chi_1-\chi_2\|_{0,\Omega}\|p_2\|_{L^4( \Omega)}\|v\|_{L^4(\Omega)}\nonumber\\
    &\leq &C\|\chi_1-\chi_2\|_{0,\Omega}\|p_2\|_{1,\Omega}\|v\|_{1,\Omega}.\nonumber
    \end{eqnarray}
    Combining this with the stability result (\ref{up_bounded}), we get
    \begin{eqnarray}
\|u_1-u_2\|_{1,\Omega}+\frac{1}{\sqrt{\alpha}}\|p_1-p_2\|_{1,\Omega}\leq C\|\chi_1-\chi_2\|_{0,\Omega}\|p_2\|_{1,\Omega}\leq C\|\chi_1-\chi_2\|_{0,\Omega}\nonumber
\end{eqnarray}
for some constant $C>0$. This completes the proof.
\end{proof}

\begin{lemma}(\cite[Lemma 2.6]{Sokolowski})
The function $\chi_{\omega} \rightarrow \mathcal{P}_{\Omega}(\omega)$ is lower semi-continuous on the set $\mbox{Char}(\Omega) \subset L^2(\Omega)$.
\end{lemma}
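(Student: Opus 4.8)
The plan is to prove the lower semi-continuity by exploiting the fact that, by its very definition in \eqref{peri}, $\mathcal{P}_\Omega$ is a pointwise supremum of a family of affine functionals that are individually continuous on $L^2(\Omega)$; a supremum of continuous functions is always lower semi-continuous, and that is all one needs. First I would rewrite the defining expression by moving the characteristic function inside the integral, $\int_\omega\mbox{div}\,\phi\,\mathrm{dx}=\int_\Omega\chi_\omega\,\mbox{div}\,\phi\,\mathrm{dx}$, so that for each fixed admissible test field $\phi$ the quantity appearing in the supremum is the value of a linear functional evaluated at $\chi_\omega$.

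The key observation is that each of these functionals is continuous in the $L^2(\Omega)$ topology. Indeed, any admissible $\phi\in\mathcal{D}^1(\Omega,\mathbb{R}^d)$ is continuously differentiable with compact support, so $\mbox{div}\,\phi$ is continuous with compact support and in particular belongs to $L^2(\Omega)$; hence $\chi\mapsto\int_\Omega\chi\,\mbox{div}\,\phi\,\mathrm{dx}$ is a bounded linear functional on $L^2(\Omega)$. Consequently, if $\chi_n\to\chi$ in $L^2(\Omega)$ (with all $\chi_n,\chi\in\mbox{Char}(\Omega)$, corresponding to sets $\omega_n,\omega$), then $\int_\Omega\chi_n\,\mbox{div}\,\phi\,\mathrm{dx}\to\int_\Omega\chi\,\mbox{div}\,\phi\,\mathrm{dx}$ for every such $\phi$.

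Given this, the conclusion follows by the standard interchange of limit and supremum, carried out in the right order. For a fixed admissible $\phi$, the defining supremum gives $\int_\Omega\chi_n\,\mbox{div}\,\phi\,\mathrm{dx}\le\mathcal{P}_\Omega(\omega_n)$ for every $n$; passing to the limit using the continuity just established yields $\int_\Omega\chi\,\mbox{div}\,\phi\,\mathrm{dx}=\lim_n\int_\Omega\chi_n\,\mbox{div}\,\phi\,\mathrm{dx}\le\liminf_n\mathcal{P}_\Omega(\omega_n)$. Since the right-hand side does not depend on $\phi$, I can then take the supremum over all admissible $\phi$ on the left to obtain $\mathcal{P}_\Omega(\omega)\le\liminf_n\mathcal{P}_\Omega(\omega_n)$, which is exactly the asserted lower semi-continuity.

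There is no serious analytic obstacle here; the only point requiring care is the ordering of the two limiting operations. One must pass to the limit in $n$ with $\phi$ held fixed, where the continuity of the individual linear functional applies, obtaining a bound by $\liminf_n\mathcal{P}_\Omega(\omega_n)$ that is uniform in $\phi$, and only afterwards take the supremum over $\phi$; reversing this order would not yield the result. I would also remark that the same argument in fact establishes lower semi-continuity with respect to the weaker notion of weak convergence in $L^2(\Omega)$, since each test functional $\chi\mapsto\int_\Omega\chi\,\mbox{div}\,\phi\,\mathrm{dx}$ is weakly continuous as well.
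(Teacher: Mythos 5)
Your argument is correct and is exactly the standard proof of this fact: the paper itself gives no proof, deferring to \cite[Lemma 2.6]{Sokolowski}, where the lower semi-continuity of the relative perimeter is established in precisely this way, as a supremum over fixed test fields $\phi$ of ($L^2$-, indeed weakly) continuous linear functionals of $\chi_\omega$. The ordering of the two limiting operations that you single out is indeed the only point of substance, and you handle it correctly.
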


To prove the existence theorem, we define a characteristic set (cf. \cite{Sokolowski})
\begin{equation}
    \mbox{Char}(\Omega, M) = \{ \chi_{\omega} \in \mbox{Char}(\Omega):\ \mathcal{P}_{\Omega}(\omega) \leq M \}\nonumber
\end{equation}
for some $M>0$. According to Proposition 2.8 in \cite{Sokolowski}, the set $\mbox{Char}(\Omega, M)$ is compact in $L^2(\Omega)$. Now we are ready to prove Theorem \ref{exist}.
\begin{proof}
    If  the perimeter $\mathcal{P}_{\Omega}(\omega)$ is not finite for a domain $\omega$, then the objective functional $J$ is also infinite. Thus, we only need to consider the finite case. Define $j(\omega):=J(\omega,u_\omega,p_\omega)$. Let $\{ \chi_n \}$ be a minimization sequence of $\omega_n$ that satisfies
    \begin{equation} 
       \lim_{n \rightarrow \infty} j(\chi_n) = \inf_{\chi \in \mbox{Char}(\Omega)} j(\chi).\label{lim}
    \end{equation}
    It can be shown that there exists a constant $M>0$ such that $\mathcal{P}_{\Omega}(\omega_n) \leq M$ for all $n\in \mathbb{N}^+$. Therefore, we can replace the admissible domain in (\ref{lim}) by $\mbox{Char}(\Omega, M)$, incorporating the perimeter constraint. Since the set $\mbox{Char}(\Omega, M)$ is compact and $\chi_n \in \mbox{Char}(\Omega, M)$, we can extract a subsequence $\{ \chi_{n_k} \}$ which converges to $\chi_0 \in \mbox{Char}(\Omega, M)$. Using the lower semi-continuity of $J$, we obtain
    \begin{equation}
        j(\chi_0) \leq  \liminf_{k \rightarrow \infty} j(\chi_{n_k}) =  \lim_{n \rightarrow \infty} j(\chi_n) = \inf_{\chi \in \mbox{Char}(\Omega, M)} j(\chi).\nonumber
    \end{equation}
    On the other hand, by definition we have
    \begin{equation}
        J(\chi_0) \geq \inf_{\chi \in \mbox{Char}(\Omega, M)} J(\chi),\nonumber
    \end{equation}
    which implies 
    \begin{equation}
        J(\chi_0) = \inf_{\chi \in \mbox{Char}(\Omega, M)} J(\chi),\nonumber
    \end{equation}
   this completes the proof.
\end{proof}

\section{Shape sensitivity analysis}
In this section, we will calculate the shape derivative of the shape optimization problem (\ref{shape_obj})-(\ref{shape_state}) using standard shape sensitivity analysis.

To begin with, we introduce several definitions. Assume that $\Omega$ is an open, bounded, and Lipschitz domain. Let ${\bm{V}} \in \bm{W}_0^{1,\infty}(\Omega)$ be a given velocity field. Using the perturbation of identity approach, we denote the perturbed domain for each $t > 0$ by
\begin{equation}
    \Omega_t := T_t(\Omega)[{\bm{V}}] = ({\rm Id} + t{\bm{V}})(\Omega).\nonumber
\end{equation}
For any function $u \in H^1(\Omega)$ with associated function $u_t \in H^1(\Omega_t)$ defined in $\Omega_t$,  we define the material derivative at $x \in \Omega$ as (cf. \cite{Sokolowski})
\begin{equation}
     \dot{u}(x) = \lim\limits_{t \rightarrow 0^+} \frac{u_t(T_t(x))-u(x)}{t},\nonumber
\end{equation}
while the shape derivative of $u$ at $x \in \Omega$ is given by (cf. \cite{Sokolowski})
\begin{equation}
    u'(x)= \dot{u}(x) - \nabla u(x) \cdot {\bm{V}}(x).\nonumber
\end{equation}

\begin{theorem}\label{exist_md}
Assume that $f,u_n,u_d$ are given functions such that $f \in L^2(\Omega)$, $u_n \in H^{1}(\Omega)$ and $u_d \in H^{1}(\Omega)$. Further assume that $\omega$ is of class $C^k$ ($k\geq 2$). Then for the velocity field ${\bm{V}} \in C([0,\epsilon]; \mathcal{D}^k(\Omega;\mathbb{R}^d))$, the weak material derivative of the elliptic system (\ref{shape_state}) in the direction ${\bm{V}}$ exists.
\end{theorem}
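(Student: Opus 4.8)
The plan is to transport the perturbed problem back to the fixed reference configuration and then differentiate in $t$, using the inf-sup stability already available for the coupled system. Since $\bm{V}\in\mathcal{D}^k(\Omega;\mathbb{R}^d)$ has compact support, the map $T_t=\mathrm{Id}+t\bm{V}$ equals the identity in a neighbourhood of $\Gamma$; hence $T_t(\Omega)=\Omega$ and only the source support is moved, $\omega_t:=T_t(\omega)$, while every boundary integral over $\Gamma$ is left untouched. First I would write the weak formulation (\ref{shape_state_weak}) on the perturbed configuration for the solution $(u_t,p_t)$, choose the transported test functions $v=\phi\circ T_t^{-1}$ and $w=\psi\circ T_t^{-1}$ with $\phi,\psi\in H^1(\Omega)$, and change variables $y=T_t(x)$. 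Writing $u^t:=u_t\circ T_t$, $p^t:=p_t\circ T_t$, $J_t:=\det(DT_t)$ and $A_t:=J_t\,(DT_t)^{-1}(DT_t)^{-\top}$, this yields a transported variational system on $\Omega$ whose gradient term is weighted by $A_t$ and whose zeroth-order and coupling terms are weighted by $J_t$, with load $\int_\Omega (f\circ T_t)\,J_t\,\phi\,\mathrm{dx}+\int_\Gamma u_n\phi\,\mathrm{ds}$. At $t=0$ one has $A_0=I$, $J_0=1$, and the system reduces to (\ref{shape_state_weak}) with $(u^0,p^0)=(u,p)$.

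The next step is a uniform well-posedness estimate. Because $t\mapsto A_t$ and $t\mapsto J_t$ are smooth with $A_t\to I$ and $J_t\to 1$ in $L^\infty(\Omega)$ as $t\to 0^+$, the transported form $\bm{b}_t$ is a small perturbation of the form $\bm{b}$ of (\ref{compact_form}), and the inf-sup property recorded after (\ref{compact_form}) is stable under such perturbations. Consequently $\bm{b}_t$ is inf-sup stable uniformly for $t$ small, which gives a bound $\|u^t\|_{1,\Omega}+\|p^t\|_{1,\Omega}\le C$ independent of $t$ and, arguing exactly as in the continuity lemma of Section 3, the strong convergence $u^t\to u$ and $p^t\to p$ in $H^1(\Omega)$.

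I would then set $U^t:=(u^t-u)/t$ and $P^t:=(p^t-p)/t$. Subtracting the systems at level $t$ and at level $0$ and dividing by $t$ shows that $(U^t,P^t)$ solves the \emph{same} uniformly stable transported system governed by $\bm{b}_t$, with a right-hand side assembled from the coefficient difference quotients $(A_t-I)/t$ and $(J_t-1)/t$ tested against $\nabla u^t,u^t,p^t$, plus the difference quotient of the load. Since $(A_t-I)/t\to \dot A=(\operatorname{div}\bm{V})I-D\bm{V}-D\bm{V}^{\top}$ and $(J_t-1)/t\to \operatorname{div}\bm{V}$ uniformly, and $\{u^t,p^t\}$ is bounded in $H^1(\Omega)$, these terms define a uniformly bounded functional on $\bm{X}$. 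The delicate term is the load, because $f$ is only $L^2$ and $(f\circ T_t-f)/t$ need not converge; here I would first move the diffeomorphism onto the test function by writing $\int_\Omega (f\circ T_t)\,J_t\,\phi\,\mathrm{dx}=\int_\Omega f\,(\phi\circ T_t^{-1})\,\mathrm{dx}$, so that the $t$-derivative lands on $\phi\in H^1(\Omega)$ and produces $-\int_\Omega f\,(\nabla\phi\cdot\bm{V})\,\mathrm{dx}$, a bounded functional. Uniform inf-sup then yields $\|U^t\|_{1,\Omega}+\|P^t\|_{1,\Omega}\le C$.

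Finally I would extract a weakly convergent subsequence $U^t\rightharpoonup \dot u$, $P^t\rightharpoonup \dot p$ in $H^1(\Omega)$ and pass to the limit in its variational problem; the strong convergence $u^t\to u$, $p^t\to p$ together with the uniform convergence of the coefficient difference quotients let me identify $(\dot u,\dot p)$ as the solution of the limiting material-derivative system associated with $\bm{b}=\bm{b}_0$. Uniqueness of that limit, again by inf-sup, upgrades the subsequential convergence to convergence of the whole family, so the weak material derivative exists. I expect the main obstacle to be structural rather than computational: ensuring that the inf-sup constant of the coupled saddle-point form $\bm{b}_t$ stays bounded away from zero uniformly in $t$ (so that the non-coercive, two-field structure does not degenerate under the perturbation), and correctly handling the low-regularity load term through the change-of-variables device above.
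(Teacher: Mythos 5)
Your proposal is correct and follows essentially the same route as the paper: pull the perturbed system back to the reference domain via $T_t$, obtain $t$-uniform bounds on $(u^t,p^t)$ from the stability of the coupled (saddle-point) form, form the difference quotients, bound them uniformly, and extract a weakly convergent subsequence in $H^1(\Omega)\times H^1(\Omega)$. Your transposition of the diffeomorphism onto the test function to handle the $L^2$ load is just the rigorous reading of the paper's distributional limit $\tfrac{1}{t}(J(t)\,f\circ T_t-f)\to\operatorname{div}(f\bm{V})$, and your identification of the limit system plus the uniqueness argument upgrading subsequential to full convergence is a small tightening of a step the paper leaves implicit.
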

\begin{proof}
We denote by $(u_t, p_t) \in H^1(\Omega_t)\times H^1(\Omega_t)$ the weak solution of the coupled system (\ref{shape_state}) on the perturbed domain $\Omega_t$.   This solution satisfies the following variational system:
\begin{equation}\label{perturbed_st}
\left\{
    \begin{aligned}
        &\int_{\Omega_t}(\nabla u_t \cdot \nabla v_t + u_tv_t )\mathrm{dx} + \frac{1}{\alpha} \int_{\omega_t}p_tv_t \mathrm{dx} = \int_{\Omega_t}fv_t \mathrm{dx} + \int_{\Gamma_t}u_n v_t \mathrm{ds},\\
        &\int_{\Omega_t}(\nabla p_t \cdot \nabla w_t + p_tw_t) \mathrm{dx} = \int_{\Gamma_t}(u_t-u_d) w_t \mathrm{ds},
    \end{aligned}
    \right.
\end{equation}
 for all test functions $(v_t,w_t):=(v\circ T_t^{-1},w\circ T_t^{-1}) \in H^1(\Omega_t)\times H^1(\Omega_t)$, where $(v,w) \in H^1(\Omega)\times H^1(\Omega)$. Since ${\bm{V}}|_{\Gamma}=0$, we have $\Gamma_t=\Gamma$ and the boundary integrals in the above variational problem are well-defined. Define
 \begin{equation}
    u^t= u_t \circ T_t\in H^1(\Omega), \quad  p^t= p_t \circ T_t \in H^1(\Omega),\nonumber
\end{equation}
and transform $\Omega_t$ to the reference domain $\Omega$ by a change of variables. We obtain
\begin{equation}\label{w_state}
\left\{
    \begin{aligned}
        &\int_{\Omega} (A(t)\nabla u^t \cdot \nabla v + J(t)u^tv) \mathrm{dx} + \frac{1}{\alpha} \int_{\omega}J(t)p^tv \mathrm{dx} = \int_{\Omega}J(t)f\circ T_tv \mathrm{dx} + \int_{\Gamma}M(t)u_n\circ T_t v \mathrm{ds},\\
        &\int_{\Omega}(A(t)\nabla p^t \cdot \nabla w + J(t)p^tw )\mathrm{dx} = \int_{\Gamma}M(t)(u^t-u_d\circ T_t) w \mathrm{ds},\\
    \end{aligned}
    \right.
\end{equation}
for all test functions $(v,w) \in H^1(\Omega)\times H^1(\Omega)$, where 
\begin{equation}
    J(t) = |\det DT_t|, \quad A(t) = J(t)(DT_t)^{-1 \ *}(DT_t)^{-1}, \quad M(t) = J(t)|^{*}(DT_t)^{-1}{\bm{n}}|.\nonumber
\end{equation}
Again use ${\bm{V}}|_{\Gamma} = {\bf 0}$, we know that $T_t|_{\Gamma} = {\rm Id}$ and $M(t)|_{\Gamma} = 1$. Moreover, due to the continuity of $A(t)$ and $J(t)$, we can choose $\tau > 0$ small enough such that for all $0 \leq t < \tau$, there exist constants $0 < C_1 < C_2$ satisfying
\begin{equation}
    \forall x \in \mathbb{R}^d, \ C_1 \le J(t) \le C_2 \ \text{and} \ C_1 |x|^2 \le A(t) x \cdot x \le C_2 |x|^2.\nonumber
\end{equation}
By assumption, $f \in L^2(\Omega), u_n \in L^{2}(\Gamma)$ and $u_d \in L^{2}(\Gamma)$. Consequently, we can find $C_3 > 0$ such that 
\begin{equation}
   ||f\circ T_t||_{0,\Omega} \leq ||f||_{0,\Omega} \leq C_3, \ ||u_n||_{0, \Gamma} \leq C_3 ,\ ||u_d||_{0, \Gamma} \leq C_3.\nonumber
\end{equation}
Since $A(t)$, $J(t)$ and $M(t)$ are all uniformly bounded, similar to the proof of (\ref{up_bounded}), we can show that
\begin{equation}
    ||u^t||_{1,\Omega} \leq C, \quad ||p^t||_{1,\Omega} \leq C,\nonumber
\end{equation}
for some $C > 0$  and all $t \in [0, \tau)$, where $C$ may depend on $\alpha$ but not on $t$. Next, define 
 \begin{equation}
    z_u^t = \frac{u^t-u}{t}, \quad z_p^t = \frac{p^t-p}{t}.\nonumber
\end{equation}
We then have
\begin{equation} \label{m_d}
\left\{
    \begin{aligned}
        &\int_{\Omega}\nabla z^t_u \cdot \nabla v + z^t_u v \mathrm{dx} + \frac{1}{\alpha} \int_{\omega}z^t_p v \mathrm{dx} \\
        &= \frac{1}{t} \left( \int_{\Omega}\left( (J(t)f\circ T_t-f)v - (A(t) - I)\nabla u^t \cdot \nabla v - (J(t) - 1)u^t v \right) \mathrm{dx} - \frac{1}{\alpha} \int_{\omega}(J(t) - 1)p^t v \mathrm{dx} \right),\\
        &\int_{\Omega}\nabla z^t_p \cdot \nabla w + z^t_p w \mathrm{dx} = \int_{\Gamma}z^t_u w \mathrm{ds} - \frac{1}{t} \left(     
        \int_{\Omega}\left(  (A(t) - I)\nabla p^t \cdot \nabla w + (J(t) - 1)p^t w \right) \mathrm{dx} \right).
    \end{aligned}
    \right.
\end{equation}
It is known that (cf. \cite{Sokolowski})
\begin{equation}\label{mapping}
\begin{aligned}
    &\lim\limits_{t \rightarrow 0^+} \frac{A(t) - I}{t} = (\mbox{div} {\bm{V}})I - ^{*}D {\bm{V}} - D{\bm{V}},\\
    &\lim\limits_{t \rightarrow 0^+} \frac{J(t) - 1}{t} = \mbox{div} {\bm{V}}, \\
    &\lim_{t \rightarrow 0^+} \frac{J(t)f\circ T_t-f}{t} = \mbox{div} (f{\bm{V}}).
\end{aligned}
\end{equation}
Similar to the proof of (\ref{up_bounded}), one can show that $z_u^t$ and $z_p^t$ are uniformly bounded, i.e.
\begin{equation}
    ||z_u^t||_{1,\Omega} \leq C, \quad ||z_p^t||_{1,\Omega} \leq C, \nonumber
\end{equation}
for some constant $C>0$. Thus, we can extract a subsequence, still denoted by $(z_u^t,z_p^t)$, that converges weakly in $H^1(\Omega)$, i.e.,
\begin{equation}
    z_u^t \rightharpoonup \dot{u}, \quad z_p^t \rightharpoonup \dot{p} \quad  \mbox{in} \ H^1(\Omega)\quad \mbox{as}\ t\rightarrow 0^+.
\end{equation}
From the definition of the material derivative we conclude the result.
\end{proof}

Under the assumptions of Theorem \ref{exist_md}, we know that the weak material derivatives $\dot{u}(\Omega;{\bm{V}})$ and $\dot{p}(\Omega;{\bm{V}})$ exist in $H^1(\Omega)$. Furthermore, we assume $\nabla u \cdot {\bm{V}} \in H^1(\Omega)$ and $\nabla p \cdot {\bm{V}} \in H^1(\Omega)$ for all velocity fields ${\bm{V}} \in C([0, \epsilon]; \mathcal{D}^k(\Omega;\mathbb{R}^d))$ $(k\geq 2)$. This condition holds if $u,p \in H^2(\Omega)$, which can be ensured if $\Omega$ is a convex polyhedron or has a smooth boundary, by imposing additional higher regularity on $u_n$ and $u_d$. By definition the shape derivatives of $u$ and $p$ in the direction ${\bm{V}}$ exist and lie in the space $H^1(\Omega)$. 

We now derive the forms of $u'(\Omega; {\bm{V}})$ and $p'(\Omega; {\bm{V}})$. For a function $\phi(t, x) \in C([0, \tau];W^{1,1}_{\rm loc}(\mathbb{R}^{d})) \cap C^{1}([0, \tau];L^{1}_{\rm loc}(\mathbb{R}^{d}))$, we define the function $F$ as follows:
\begin{equation}
    F_{\bm{V}}(t) := \int_{\Omega_t} \phi(t,x) \mathrm{dx}.
\end{equation}
The shape derivative of $F_{\bm{V}}(t)$ at $t=0$  can be calculated as (cf. \cite[Chap. 9, Thm. 4.2]{DelfourZolésio2011})
\begin{equation}\label{shapederi}
    dF(\Omega;\bm{V}) = dF_{\bm{V}}(0) = \int_{\Omega} (\phi'(0,x) + \mathrm{div} (\phi(0,x) \bm{V})) \mathrm{dx}.
\end{equation}
Considering the weak formulation (\ref{perturbed_st}) of the state system defined on the perturbed domain $\Omega_t$, we take the derivative with respect to $t$ at $t = 0$ on both sides. From (\ref{shapederi}) we have
\begin{equation}
\left\{
    \begin{aligned}
        &\int_{\Omega}(\nabla u' \cdot \nabla v + u'v) \mathrm{dx} + \frac{1}{\alpha} \int_{\omega}p'v \mathrm{dx} + \int_{\Omega}
        \mathrm{div}((\nabla u \cdot  \nabla v + uv)\bm{V}) \mathrm{dx} \\
        &+ \frac{1}{\alpha} \int_{\omega} \mathrm{div}(pv\bm{V}) \mathrm{dx} - \int_{\Omega} \mathrm{div}(fv\bm{V}) \mathrm{dx} = 0,\\
        &\int_{\Omega}(\nabla p' \cdot \nabla w + p'w) \mathrm{dx} + \int_{\Omega} \mathrm{div}((\nabla p \cdot  \nabla w + pw)\bm{V}) \mathrm{dx} - \int_{\Gamma} u'w \mathrm{ds} = 0.
    \end{aligned}
    \right.
\end{equation}
Using Green's formula, we can reformulate the system as: 
\begin{equation}
\left\{
    \begin{aligned}
        &\int_{\Omega}(\nabla u' \cdot \nabla v + u'v) \mathrm{dx} + \frac{1}{\alpha} \int_{\omega}p'v \mathrm{dx} + \int_{\Gamma} (\nabla u \cdot  \nabla v + uv)(\bm{V} \cdot \bm{n}) \mathrm{ds} \\
        &+ \frac{1}{\alpha} \int_{\partial\omega} pv(\bm{V} \cdot \bm{n}) \mathrm{ds} - \int_{\Gamma} fv(\bm{V} \cdot \bm{n}) \mathrm{ds} = 0,\\
        &\int_{\Omega}(\nabla p' \cdot \nabla w + p'w) \mathrm{dx} + 
        \int_{\Gamma} (\nabla p \cdot  \nabla w + pw)(\bm{V} \cdot \bm{n}) \mathrm{ds} - \int_{\Gamma} u'w \mathrm{ds} = 0,
    \end{aligned}
    \right.
\end{equation}
this along with the fact that ${\bm{V}}|_{\Gamma} = 0$ yields
\begin{equation}
\left\{
    \begin{aligned}
        &\int_{\Omega}(\nabla u' \cdot \nabla v + u'v) \mathrm{dx} + \frac{1}{\alpha} \int_{\omega}p'v \mathrm{dx} + \frac{1}{\alpha} \int_{\partial\omega}pv (\bm{V} \cdot \bm{n}) \mathrm{ds} = 0,\\ 
        &\int_{\Omega}(\nabla p' \cdot \nabla w + p'w) \mathrm{dx} - \int_{\Gamma}u'w \mathrm{ds} = 0. \\
    \end{aligned}
    \right.
\end{equation}
From this, we can show that the shape derivatives $(u', p')$ are given as the unique solution  to the following coupled boundary value problem:
\begin{equation}
\left\{
    \begin{aligned}
        -\Delta u' + u' + \frac{1}{\alpha} \chi_{\omega} p' &= 0 &\quad &\mbox{in} \ \Omega, \\
        -\Delta p' + p' &= 0  &\quad &\mbox{in} \ \Omega,\\
        \llbracket{\partial_n  u'} \rrbracket &= -\frac{1}{\alpha} p({\bm{V}} \cdot {\bm{n}})  &\quad &\mbox{on} \ \partial\omega,\\
        \partial_n  u' &= 0  &\quad &\mbox{on} \ \Gamma,\\
        \partial_n  p' &= u' &\quad &\mbox{on} \ \Gamma.
    \end{aligned}
    \right.
\end{equation}

Now, we are prepared to derive the shape gradient of the objective functional. In certain applications, the volume constraint is crucial, see e.g., problem (\ref{OCP_obj})-(\ref{OCP_constraint}). Therefore, we incorporate the volume constraint $|\omega| = \gamma_0$, where $0<\gamma_0 < |\Omega|$, into the shape optimization problem \eqref{OPT_obj}. We replace the perimeter constraint $\mathcal{P}_{\Omega}(\omega)$ with an integral over $\partial \omega$ and introduce the volume constraint as a penalty term in the objective functional (\ref{OPT_obj}). Consequently, the minimization problem is formulated as follows:
\begin{equation}\label{obj_fuc}
    \mathop{\min}_{\omega \subset \Omega} \ J(\omega,u,p) = \frac{1}{2}||u-u_d||^2_{0, \Gamma} + \frac{1}{2\alpha}||p||^2_{0,\omega} + \lambda \mathcal{P}_{\Omega}(\omega) + \beta (\int_{\Omega} \chi_{\omega} \text{dx} - \gamma_0),
\end{equation}
where $\lambda>0$ and $\beta\geq 0$. For the shape optimization problem (\ref{obj_fuc}), 
we formulate a second adjoint system. The coupled adjoint system in weak form is introduced as follows: 
find a couple $(v,w) \in H^1(\Omega)\times H^1(\Omega)$ satisfying
\begin{equation}\label{ads}
    \left\{
    \begin{aligned}
    &(\nabla v, \nabla r)_{\Omega} + (v,r)_{\Omega} = (w-u+u_d,r)_{\Gamma} &\quad &\forall r\in H^1(\Omega), \\
    &(\nabla w, \nabla s)_{\Omega} + (w,s)_{\Omega} = -\frac{1}{\alpha} (v+p, s)_{\omega} &\quad &\forall s\in H^1(\Omega).
    \end{aligned}
    \right.
\end{equation}
The adjoint $(v,w)$ satisfies the following coupled boundary value problem
\begin{equation}\label{ads_strong}
    \left\{
    \begin{aligned}
    - \Delta w+w&=-\frac{1}{\alpha} \chi_{\omega} (v+p) \quad &\mbox{in} \ \Omega, \\
    \partial_n w &=0 \quad &\mbox{on} \ \Gamma, \\
    - \Delta v+v&=0 \quad &\mbox{in} \ \Omega, \\
    \partial_n v &=w-u+u_d \quad &\mbox{on} \ \Gamma.
    \end{aligned}
    \right.
\end{equation}
By setting $w=0$ we have $v=-p$. Therefore, there is no need to solve the above adjoint system. 

For all $t \in [0, \tau]$ and $\phi_1, \phi_2, \psi_1, \psi_2 \in H^1(\Omega_t)$, we could construct the Lagrangian functional as follows:
\begin{equation}\label{lag}
\begin{aligned}
    \mathcal{L}(\Omega_t, \phi_1, \phi_2, \psi_1, \psi_2) &= \frac{1}{2} \int_{\Gamma_t} (\phi_1-u_d)^2 \mathrm{ds} + \frac{1}{2 \alpha} \int_{\omega_t} \phi_2^{2} \mathrm{dx} + \int_{\Omega_t} (\nabla \phi_1 \cdot \nabla \psi_1 +\phi_1 \psi_1-f\psi_1) \mathrm{dx}\\
    &+ \frac{1}{\alpha} \int_{\omega_t} \phi_2 \psi_1 \mathrm{dx} - \int_{\Gamma_t} u_n \psi_1 \mathrm{ds} +\int_{\Omega_t}(\nabla \phi_2 \cdot \nabla \psi_2 +\phi_2 \psi_2) \mathrm{dx}\\
    &- \int_{\Gamma_t} (\phi_1-u_d)\psi_2 \mathrm{ds} + \beta (\int_{\omega_t}  \mathrm{dx}-\gamma_0) + \lambda \int_{\partial\omega_t} \mathrm{ds}.
\end{aligned}
\end{equation}
Calculating the first order optimality condition of the Lagrangian functional (\ref{lag}), we obtain
\begin{eqnarray}\label{rlt}
    &&\frac{\partial \mathcal{L}(\Omega_t, u_t, p_t, v_t, w_t)}{\partial u_t}[\delta u] = \frac{\partial \mathcal{L}(\Omega_t,u_t, p_t, v_t, w_t)}{\partial p_t}[\delta p]=0,\nonumber\\
    &&\frac{\partial \mathcal{L}(\Omega_t, u_t, p_t, v_t, w_t)}{\partial v_t}[\delta v]=\frac{\partial \mathcal{L}(u_t, p_t, v_t, w_t)}{\partial w_t}[\delta w]= 0, \nonumber
\end{eqnarray}
for any $\delta u, \delta p, \delta v$ and $\delta w \in H^1(\Omega_t)$. From  these conditions, we deduce that the objective functional $J(\Omega_t)$ can be expressed as a min-max of the Lagrangian functional $\mathcal{L}$ with the saddle point $(u_t, p_t, v_t, w_t)$, i.e.,
\begin{equation}
    J(\Omega_t) = \min\limits_{(\phi_1, \phi_2)} \max\limits_{(\psi_1, \psi_2)} \mathcal{L}(\Omega_t, \phi_1, \phi_2, \psi_1, \psi_2). 
\end{equation}
To calculate the shape gradient through the Lagrangian $\mathcal{L}$, we introduce a parametrization of $H^1(\Omega_t)$ as follows:
\begin{equation}
    H^1(\Omega_t) = \{ \phi \circ T_t^{-1} : \phi \in H^1(\Omega) \}.\nonumber
\end{equation}
Using this parametrization we can reformulate the Lagrange functional $\mathcal{L}$ as:
\begin{equation}
    \Bar{\mathcal{L}}(t, \phi_1, \phi_2, \psi_1, \psi_2) =  \mathcal{L}(T_t(\Omega)[\bm{V}], \phi_1 \circ T_t^{-1}, \phi_2 \circ T_t^{-1}, \psi_1 \circ T_t^{-1}, \psi_2 \circ T_t^{-1}),
\end{equation}
with $\phi_1, \phi_2, \psi_1, \psi_2 \in H^1(\Omega)$. Let $(u,  p, v, w)$ be the solutions of the state and adjoint system. Then by C\'ea's method (cf. \cite{Cea}), we have
\begin{equation}\label{Cea_method}
    dJ(\Omega; \bm{V}) = \min\limits_{(\phi_1, \phi_2)} \max\limits_{(\psi_1, \psi_2)} \partial_t \Bar{\mathcal{L}}(t, \phi_1, \phi_2, \psi_1, \psi_2)|_{t=0} = \partial_t \Bar{\mathcal{L}}(t, u,  p, v, w)|_{t=0},
\end{equation}
for ${\bm{V}} \in \mathcal{U}$ with $\mathcal{U} = \{\bm{g} \in \bm{W}^{1, \infty}(\Omega):\ \bm{g}|_{\Gamma} = {\bf 0}\}$. As pointed out in Section 4.6 of \cite{Allaire}, the formal C\'ea's method is also rigorous if we can prove the shape differentiability of the state equation with respect to the domain, as done in Theorem \ref{exist_md}. Next, we calculate the Eulerian derivative of the shape functional. Note that ${\bm{V}}|_{\Gamma} = 0$, we rewrite the Lagrangian $\Bar{\mathcal{L}}$ defined on the perturbed domain $\Omega_t$ to one on the fixed domain $\Omega$
\begin{equation}\label{lt}
\begin{aligned}
    \Bar{\mathcal{L}}(t, &\phi_1, \phi_2, \psi_1, \psi_2) = \frac{1}{2} \int_{\Gamma_t} (\phi_1 \circ T_t^{-1}-u_d)^2 \mathrm{ds} + \frac{1}{2 \alpha} \int_{\omega_t} (\phi_2 \circ T_t^{-1})^{2} \mathrm{dx} - \int_{\Gamma_t} u_n \psi_1 \circ T_t^{-1} \mathrm{ds}\\
    &+ \int_{\Omega_t} (\nabla (\phi_1 \circ T_t^{-1}) \cdot \nabla (\psi_1 \circ T_t^{-1}) +\phi_1 \circ T_t^{-1} \psi_1 \circ T_t^{-1}-f\psi_1 \circ T_t^{-1}) \mathrm{dx} \\
    &+\int_{\Omega_t}(\nabla (\phi_2 \circ T_t^{-1}) \cdot \nabla (\psi_2 \circ T_t^{-1}) +\phi_2 \circ T_t^{-1} \psi_2 \circ T_t^{-1}) \mathrm{dx} - \int_{\Gamma_t} (\phi_1 \circ T_t^{-1}-u_d)\psi_2 \circ T_t^{-1} \mathrm{ds}\\
    &+ \frac{1}{\alpha} \int_{\omega_t} \phi_2 \circ T_t^{-1} \psi_1 \circ T_t^{-1}\mathrm{dx} + \beta (\int_{\omega_t} \mathrm{dx}-\gamma_0) + \lambda \int_{\partial\omega_t} \mathrm{ds} \\
    &= \frac{1}{2} \int_{\Gamma} (\phi_1-u_d)^2 \mathrm{ds} + \frac{1}{2 \alpha} \int_{\omega} J(t) (\phi_2)^{2} \mathrm{dx} + \int_{\Omega} A(t)(\nabla \phi_1 \cdot \nabla  \psi_1)\mathrm{dx} - \int_{\Gamma}u_n \psi_1 \mathrm{ds} \\
    &+  \int_{\Omega} J(t)  (\phi_1 \psi_1-\psi_1f\circ T_t) )\mathrm{dx} + \int_{\Omega}A(t) (\nabla \phi_2 \cdot \nabla \psi_2) \mathrm{dx}+ \int_{\Omega} 
     J(t)(\phi_2 \psi_2) \mathrm{dx} \\
    &+ \frac{1}{\alpha} \int_{\omega} J(t)\phi_2 \psi_1 \mathrm{dx} - \int_{\Gamma}(\phi_1-u_d)\psi_2 \mathrm{ds} + \beta (\int_{\omega} J(t) \mathrm{dx}-\gamma_0) + \lambda \int_{\partial\omega} M(t) \mathrm{ds}.
\end{aligned}
\end{equation}
Calculating the partial derivative of (\ref{lt}) with respect to $t$, we obtain 
\begin{equation}
\begin{aligned}
    \partial_t \Bar{\mathcal{L}}(t, \phi_1, \phi_2, &\psi_1, \psi_2)  
    = \frac{1}{2 \alpha} \int_{\omega} J'(t) (\phi_2)^{2} \mathrm{dx} + \int_{\Omega} A'(t)(\nabla \phi_1 \cdot \nabla \psi_1)\mathrm{dx}-\int_{\Omega} J(t)  (\psi_1 (\nabla f \cdot \bm{V}) )\mathrm{dx} \\
    &+  \int_{\Omega} J'(t)  (\phi_1\psi_1-\psi_1f\circ T_t) )\mathrm{dx} + \frac{1}{\alpha} \int_{\omega} J'(t) \phi_2\psi_1 \mathrm{dx} \\
    &+\int_{\Omega}A'(t) (\nabla \phi_2 \cdot \nabla \psi_2) \mathrm{dx}+ \int_{\Omega} 
     J'(t)(\phi_2\psi_2) \mathrm{dx} + \beta \int_{\omega} 
     J'(t) \mathrm{dx} + \lambda \int_{\partial\omega} M'(t) \mathrm{ds}.
\end{aligned}
\end{equation}
On the other hand, it has been proved that (cf. \cite[Chap. 9, eq. (4.12)]{DelfourZolésio2011})
\begin{equation}
\lim_{t \rightarrow 0^+} \frac{M(t)-1}{t} = \mbox{div} \bm{V} - D \bm{V} \bm{n} \cdot \bm{n}.
\end{equation}
Combining this with (\ref{mapping}) and (\ref{Cea_method}), we obtain
\begin{equation}
    \begin{aligned}
        dJ(\Omega;{\bm{V}}) = \partial_t \Bar{\mathcal{L}}(t, u, &p, v, q)|_{t=0}
        = \frac{1}{2 \alpha} \int_{\omega} \mbox{div} {\bm{V}} (p)^{2} \mathrm{dx} + \int_{\Omega} ((\mbox{div} {\bm{V}})I - ^{*}D {\bm{V}} - D{\bm{V}})(\nabla u \cdot \nabla v)\mathrm{dx} \\
        &+  \int_{\Omega} \mbox{div} {\bm{V}} (uv-vf) )\mathrm{dx} -\int_{\Omega} (v (\nabla f \cdot {\bm{V}}) )\mathrm{dx} + \frac{1}{\alpha} \int_{\omega} \mbox{div} {\bm{V}} pv \mathrm{dx}\\
        & +\int_{\Omega}((\mbox{div} {\bm{V}})I - ^{*}D {\bm{V}} - D{\bm{V}}) (\nabla p \cdot \nabla q) \mathrm{dx}+ \int_{\Omega} \mbox{div}{\bm{V}} (pq) \mathrm{dx} \\
        &+ \lambda \int_{\partial\omega} (\mbox{div} \bm{V} - D \bm{V} \bm{n} \cdot \bm{n}) \mathrm{ds} +\beta \int_{\omega} \mbox{div} \bm{V} \mathrm{dx}.
    \end{aligned}
\end{equation}
This gives us the shape derivative of the distributed type.

Assume $\omega \in {C}^2$, we can derive the surface expression of the shape gradient as
\begin{equation}\label{bsg}
    \begin{aligned}
        dJ(\Omega;{\bm{V}})
        &= \int_{\Gamma} (\nabla u \cdot \nabla v +uv-v f)({\bm{V}} \cdot {\bm{n}}) \mathrm{ds} + \int_{\Gamma} (\nabla p \cdot \nabla +pq)({\bm{V}} \cdot {\bm{n}}) \mathrm{ds} \\
             &+\frac{1}{2 \alpha} \int_{\partial\omega} p^2({\bm{V}} \cdot {\bm{n}}) \mathrm{ds} + \frac{1}{\alpha} \int_{\partial\omega} pv({\bm{V}} \cdot {\bm{n}}) \mathrm{ds} + \frac{1}{2}\int_{\Gamma} (\frac{\partial}{\partial {\bm{n}}} + \kappa)((u_t-u_d )^2)({\bm{V}} \cdot {\bm{n}}) \mathrm{ds}\\
        &-\int_{\Gamma} (\frac{\partial}{\partial {\bm{n}}} + \kappa)(u_{\bm{n}} v)({\bm{V}} \cdot \bm{n}) \mathrm{ds}-\int_{\Gamma} (\frac{\partial}{\partial {\bm{n}}} + \kappa)((u-u_d)q)({\bm{V}} \cdot {\bm{n}}) \mathrm{ds} \\
        &+ \lambda \int_{\partial\omega} (\mbox{div} \bm{V} - D \bm{V} \bm{n} \cdot \bm{n}) \mathrm{ds} + \beta \int_{\partial\omega} ({\bm{V}} \cdot \bm{n}) \mathrm{ds}.
    \end{aligned}
\end{equation}
Since ${\bm{V}}|_{\Gamma} = 0$, (\ref{bsg})  simplifies to
\begin{equation}\label{bsg2}
    \begin{aligned}
        dJ(\Omega;\bm{V})
        &=\frac{1}{2 \alpha} \int_{\partial\omega} (p^2 + 2pv)({\bm{V}} \cdot \bm{n}) \mathrm{ds} + \lambda \int_{\partial\omega} (\mbox{div} \bm{V} - D \bm{V} \bm{n} \cdot \bm{n}) \mathrm{ds} +\beta \int_{\partial\omega} ({\bm{V}} \cdot \bm{n}) \mathrm{ds}.
    \end{aligned}
\end{equation}
This concludes the derivation of the shape derivative and its surface expression.

\section{Numerical experiments}
In this section, we present numerical experiments to demonstrate the effectiveness and efficiency of the proposed algorithm. In our numerical examples, we consider (\ref{obj_fuc}) and use the shape gradient descent method to solve the resulting shape optimization problem. This method is the most popular shape optimization algorithm due to its simplicity and convenience for integration with various domain representation and tracking algorithms. The key aspect of this algorithm is selecting the descent direction, in which the shape derivative plays a pivotal role. In this paper, we employ the Hilbertian regularization method (cf. \cite{Allaire}) to extend and smooth the shape gradient given in (\ref{bsg2}). Specifically, we seek $\bm{V}\in \bm{H}^1_0(\Omega)$ such that
\begin{eqnarray}\label{H1_regularization}
(\nabla \bm{V},\nabla \bm{W}) +(\bm{V},\bm{W})=-  dJ(\Omega;\bm{W})\quad\forall  \bm{W}\in \bm{H}^1_0(\Omega).
\end{eqnarray}

In numerical experiments, to handle large deformations or variations between the initial and optimal shapes, we use the level-set method to track the domain boundary. The level-set method, proposed by Osher and Sethian (\cite{OsherSethian}) to track free boundaries, has been subsequently used to solve shape and topology optimization problems (cf. \cite{AllaireJouveToader, WangWangGuo2003}). Given a domain $\omega \subset \Omega$ and fix time $t$, we can define a level-set function $\phi:\mathbb{R}^{d+1} \rightarrow \mathbb{R}$ of $\Omega$ as follows:
\begin{equation}\nonumber
\left\{
    \begin{aligned}
        &\ \phi(t, x) < 0 \quad \mbox{if} \ x \in \omega, \\
        &\ \phi(t, x) = 0 \quad \mbox{if} \ x \in \partial \omega, \\
        &\ \phi(t, x) > 0 \quad \mbox{if} \ x \in \Omega \backslash \bar{\omega}.
    \end{aligned}
    \right.
\end{equation}
According to the definition, we can represent the domain $\omega$ by the negative part of $\phi$ and evolve the level-set function using the velocity field $\bm{V}$. In practice, we use the following advection type equation to update the level-set function $\phi$:
\begin{equation}\label{lst_update}
\left\{
    \begin{aligned}
         \frac{\partial \phi}{\partial t}(t,x) + \bm{V}(t,x) \cdot \nabla \phi(t,x) &= 0  &\quad &\mbox{in}\ \Omega\times (0,T),\\
        \phi(0, x) &= \phi_0 &\quad &\mbox{in}\ \Omega.
    \end{aligned}
    \right.
\end{equation}

A special case of the level-set function is the signed distance function, which meets the requirement of being neither too flat nor too steep near the boundary. During the iteration, we should check if $\phi$ deviates significantly from the signed distance function and reinitialize the level-set function if necessary. We give the re-distancing equation to correct $\phi$:
\begin{equation}\label{reinit}
\left\{
    \begin{aligned}
         \frac{\partial \phi}{\partial t}(t,x) + \mbox{sgn}(\phi) (|\nabla \phi(t,x)|-1) &= 0 &\quad &\mbox{in}\ \Omega\times (0,T), \\
        \phi(0, x) &= \phi_0 &\quad &\mbox{in}\ \Omega.
    \end{aligned}
    \right.
\end{equation}

Combining the level-set representation of domains with the steepest descent method, we present the shape optimization algorithm below:
\begin{Algorithm}\label{Alg:5.1}\textbf{Shape steepest descent method}
\begin{enumerate}
\item \textbf{Require}: {$u_d$, $\phi_{0} $, $\alpha$, $\beta$, $\lambda$ }   
  \item  {$k \gets 0$}
  \item  \textbf{repeat}
  \item  \quad
solve the state system (\ref{shape_state_weak}). 
\item  \quad
solve the adjoint system (\ref{ads}). 
\item  \quad
compute the descent direction by solving the Hilbertian regularization equation (\ref{H1_regularization}).
\item \quad
 update the level-set function by solving (\ref{lst_update}).
\item \quad check if reinitialization is needed, if so, reinitialize by (\ref{reinit}).  
\item \quad $k\gets k + 1$
\item  
\textbf{until} {$\vert J(\Omega^{k+1}) - J(\Omega^k) \vert < \epsilon \vert J(\Omega^k)\vert$} 
\end{enumerate}
\end{Algorithm}

In the following numerical examples, we consider the two dimensional case and fix the hold-all domain $\Omega = \{(x,y) \in \mathbb{R}^2: |x| < 1, \ |y| <1 \}$. All numerical experiments are conducted using the open-source software NGSolve (cf. \cite{Schoberl}).

During the iterations, we fix the penalty parameter $\lambda$ for the perimeter constraint at $10^{-6}$. The choice of the optimal regularization parameter $\alpha$ is critical for the regularization strategy. In our experiments, the regularization parameter $\alpha$ and the volume penalty parameter $\beta$ are fixed for the first twenty steps and then decrease at a rate of $0.9$. The parameters for all examples are chosen in the same manner, with different initial values.

\begin{Example}\label{e1}
In the first example, we set $u_n = \sin(\pi x)\sin(\pi y)$ on $\Gamma$, $f = 1$ in $\Omega$, and the exact source function $q_e = 1$. The exact domain $\omega_e$ is defined as  $\{(x,y) \in \Omega: 10(x+0.4-y^2)^2+x^2+y^2 < 0.5 \}$. The data $u_d$ is computed from the solution of the elliptic equation (\ref{Poisson}) using $u_n$, $f$ and the exact source $q_e$ on a fine finite element mesh consisting of 41032 elements and 20783 vertices. 

We choose the initial domain as $\omega_0 = \{(x,y) \in \Omega: (x+0.1)^2 + y^2 < 0.04 \}$ and solve the shape optimization problem on a mesh with 5820 elements and 3011 vertices. Delaunay triangles are used to partition the domain, and Algorithm \ref{Alg:5.1} is employed to obtain the approximate domain $\omega$. Define 
\begin{equation}
    \mbox{err}(q) = \Big(\int_{\omega_e} (q-q_e)^2 \mathrm{dx}\Big)^{\frac{1}{2}},
\end{equation}
which measures the distance of the approximate solutions from the true function. For a similar measure of errors, we refer to \cite{ChengGongHanZheng}. 

\begin{figure}[!htbp]
  \centering
  \subfigure[objective \ functional \ $J$]{
  \includegraphics[width=3in]{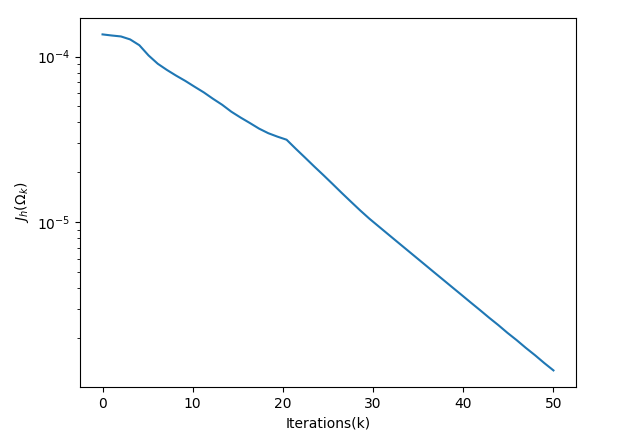}}
  \subfigure[$\mbox{err}(q)$]{
  \includegraphics[width=3in]{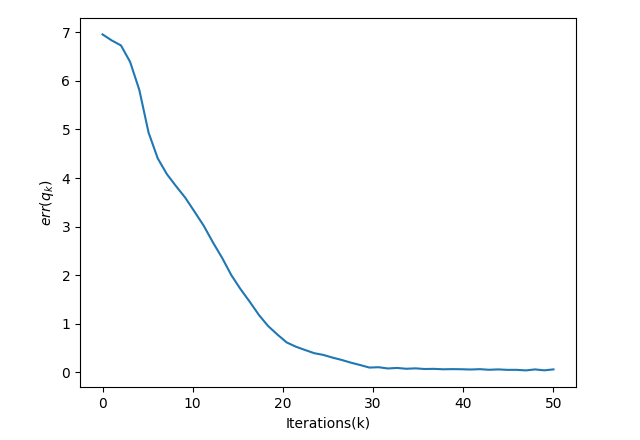}}
  \caption{Objective functional value and strength error for Example \ref{e1}.}
  \label{fig11}
\end{figure}

\begin{figure}[!htbp]
  \centering
  \subfigure[initial domain $\omega_0$]{
  \includegraphics[width=2.3in]{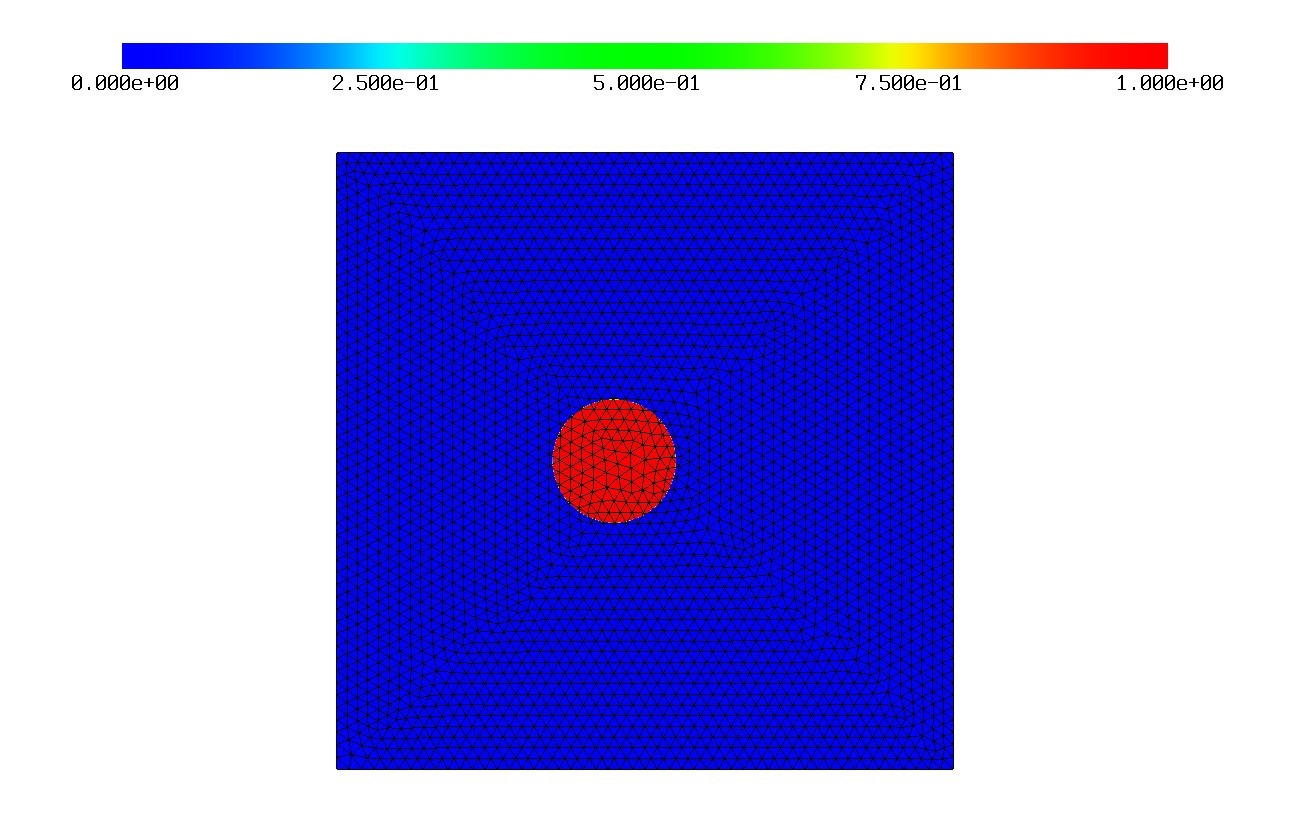}}\hspace{10mm}
  \subfigure[true source value]{
  \includegraphics[width=2.3in]{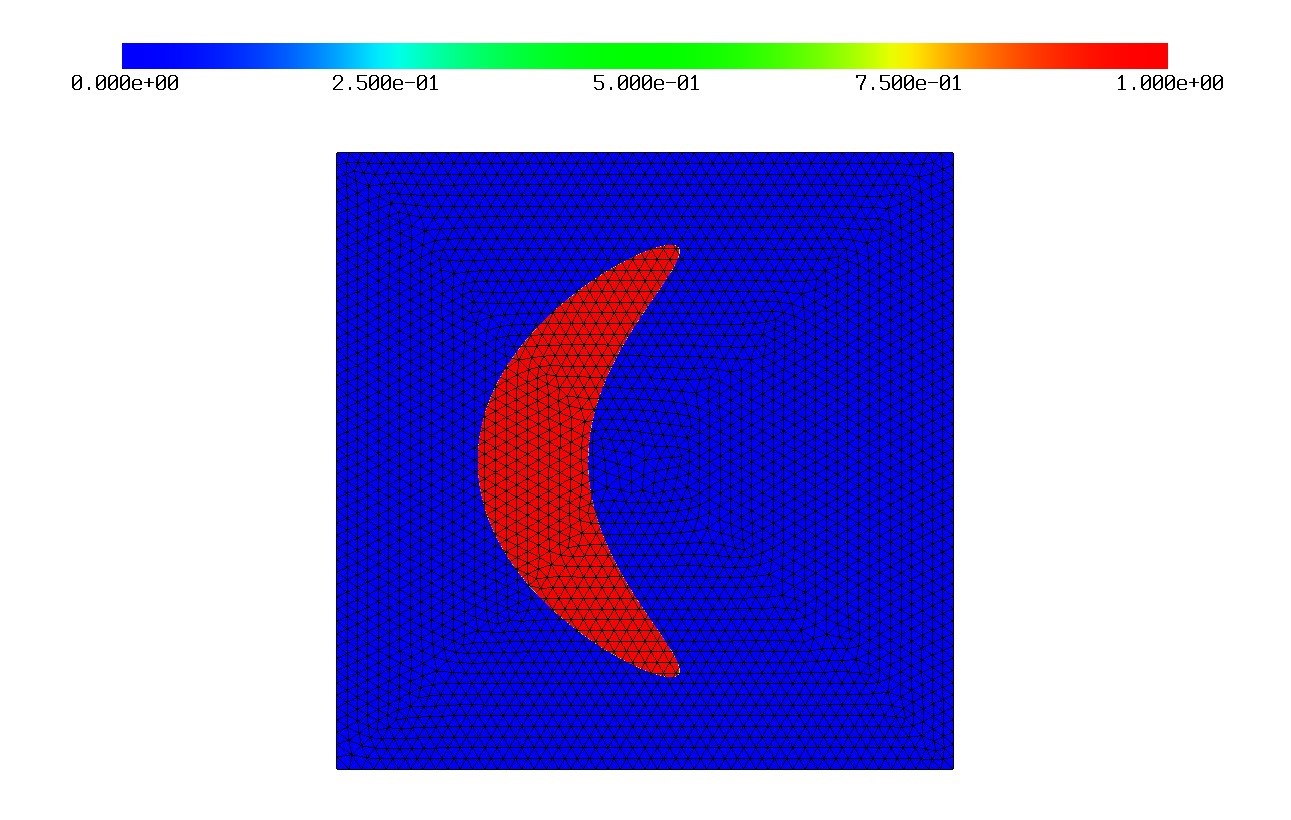}}
  \subfigure[reconstructed $\omega$ without noise]{
  \includegraphics[width=2.3in]{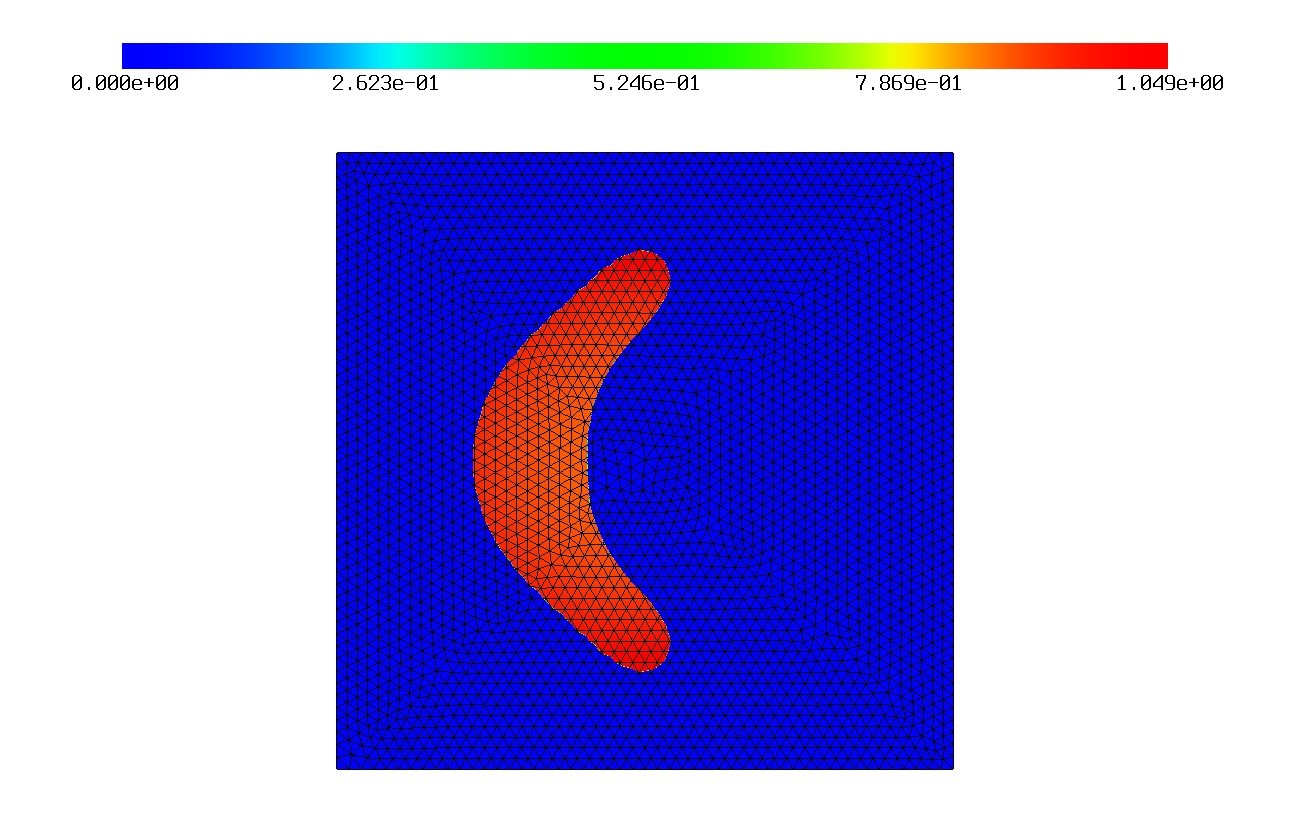}}\hspace{10mm}
  \subfigure[reconstructed $\omega$ with $10\%$  noise]{
  \includegraphics[width=2.3in]{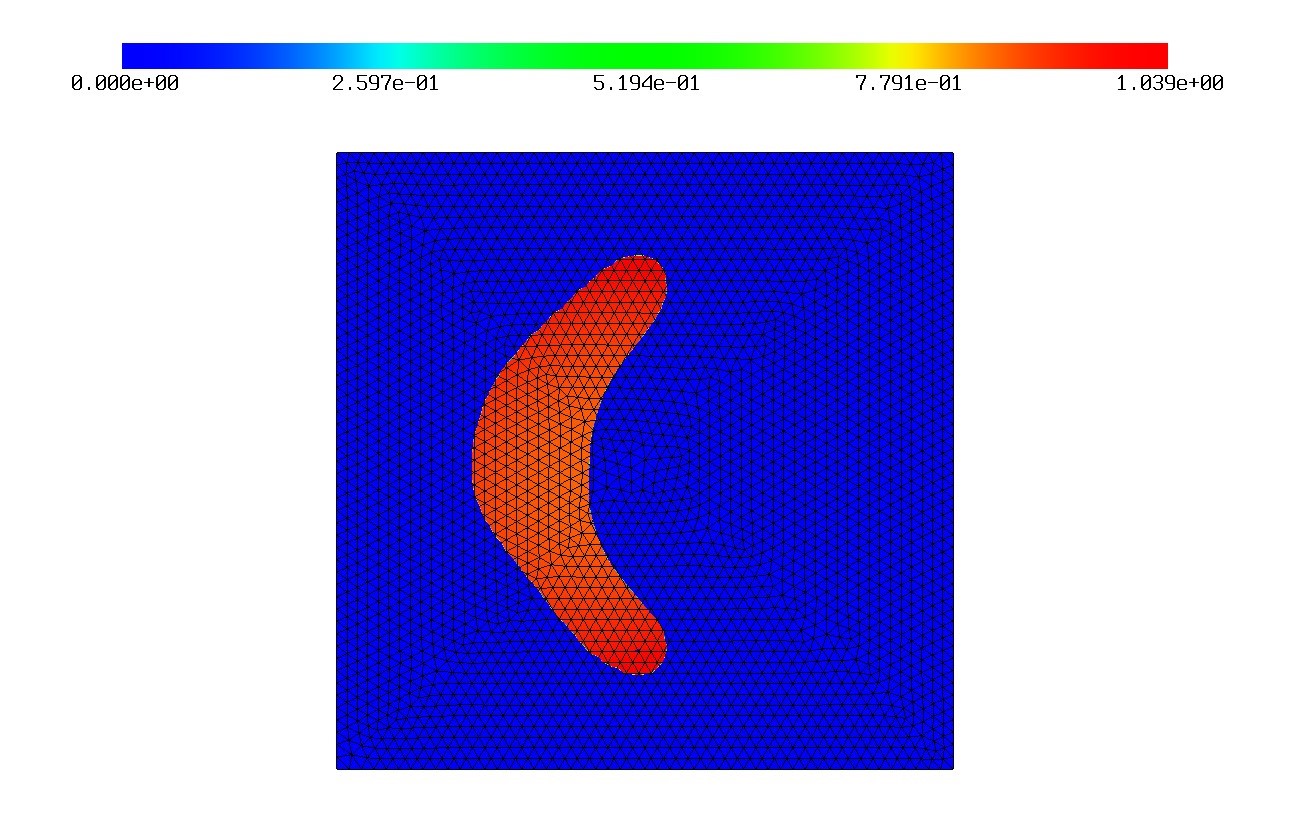}}
  \caption{Numerical results for Example \ref{e1} with different noise levels.}
  \label{fig12}
\end{figure}

Figure \ref{fig11} shows the decrease in the objective functional value alongside the strength error $\text{err}(q)$ as the iteration number increases. The reconstructed results are displayed in Figure \ref{fig12}. From Figures \ref{fig11} and \ref{fig12}, we observe that the numerical results closely approximate the true solution, accurately reconstructing both the support and the intensity of the source.

In practice, the observation $u_d$ may contain noise, so we test the algorithm's stability by adding noise to the boundary measurement $u_d$. Gaussian noise with a standard deviation $\sigma = 0.01$ is added to $u_d$, and the reconstructed domain $\omega$ is presented in Figure \ref{fig12}, showing satisfactory results. For complex shapes, the algorithm yields relatively good results. Figure \ref{fig12} illustrates that while the smooth parts of the domain are captured accurately, sharp corners are not fully recovered. This phenomenon is more precisely observed in the following example.

We remark that our proposed algorithm can simultaneously recover the support and intensity of the source by setting $q(x)=-\frac{1}{\alpha}p(x)|_{\omega}$. However, after the shape optimization algorithm, we can use a standard inverse source algorithm to refine the intensity $q(x)$ using the obtained support information $\omega$. This simple post-processing step allows the strength error $\text{err}(q)$ to reach a magnitude of $5.31 \times 10^{-2}$. 
\end{Example}

\begin{Example}\label{e2}
In the second example we consider a polygonal domain $\omega$. We set $u_n = \sin(\pi x)\sin(\pi y)$ on $\Gamma$, $f = 1$ in $\Omega$, and the exact source function $q_e = 1$. The exact domain $\omega_e$ is defined as $\{(x,y) \in \Omega: -0.1 < x < 0.6,\ 0.1 < y< 0.4 \}$. The data $u_d$ is computed in the same manner as above. 

\begin{figure}[!htbp]
  \centering
  \subfigure[initial domain $\omega_0$]{
  \includegraphics[width=2.5in]{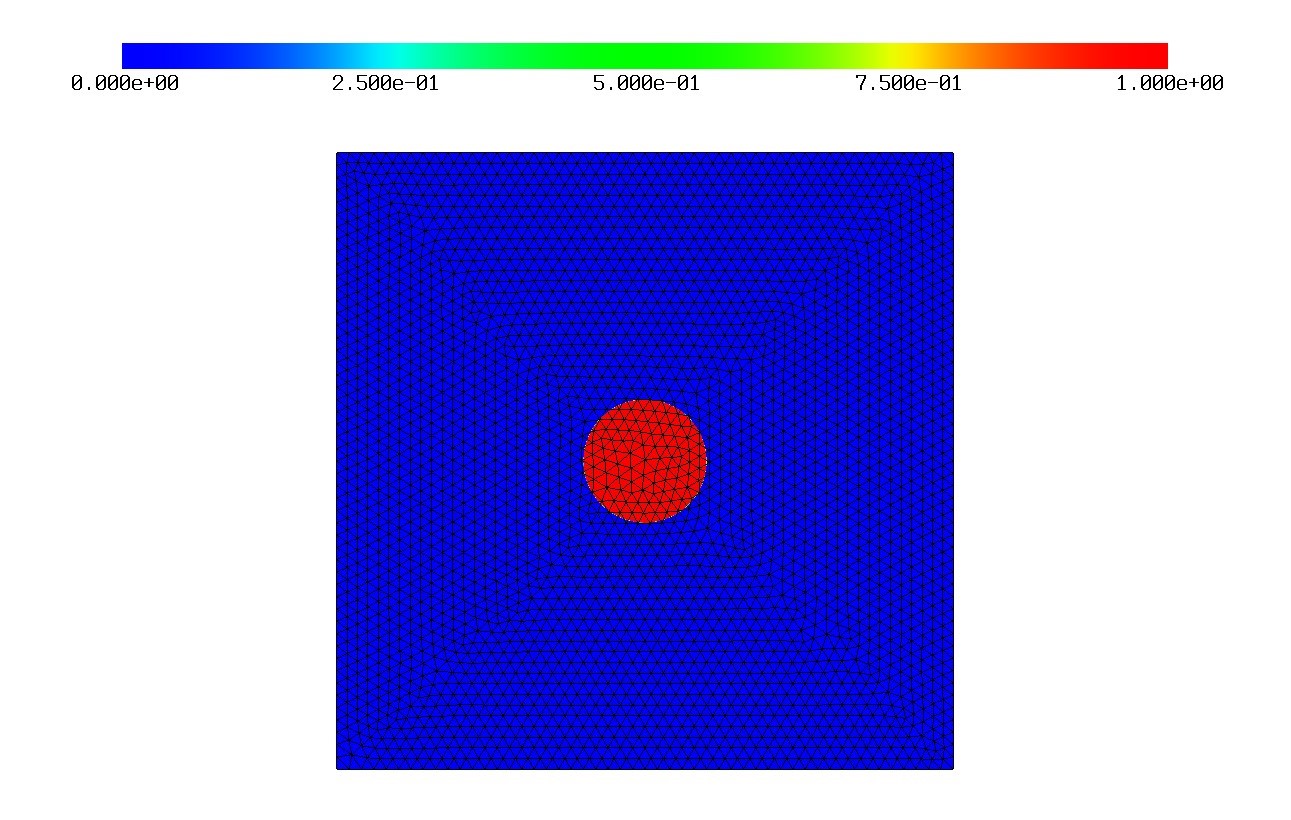}}\hspace{10mm}
  \subfigure[true source value]{
  \includegraphics[width=2.5in]{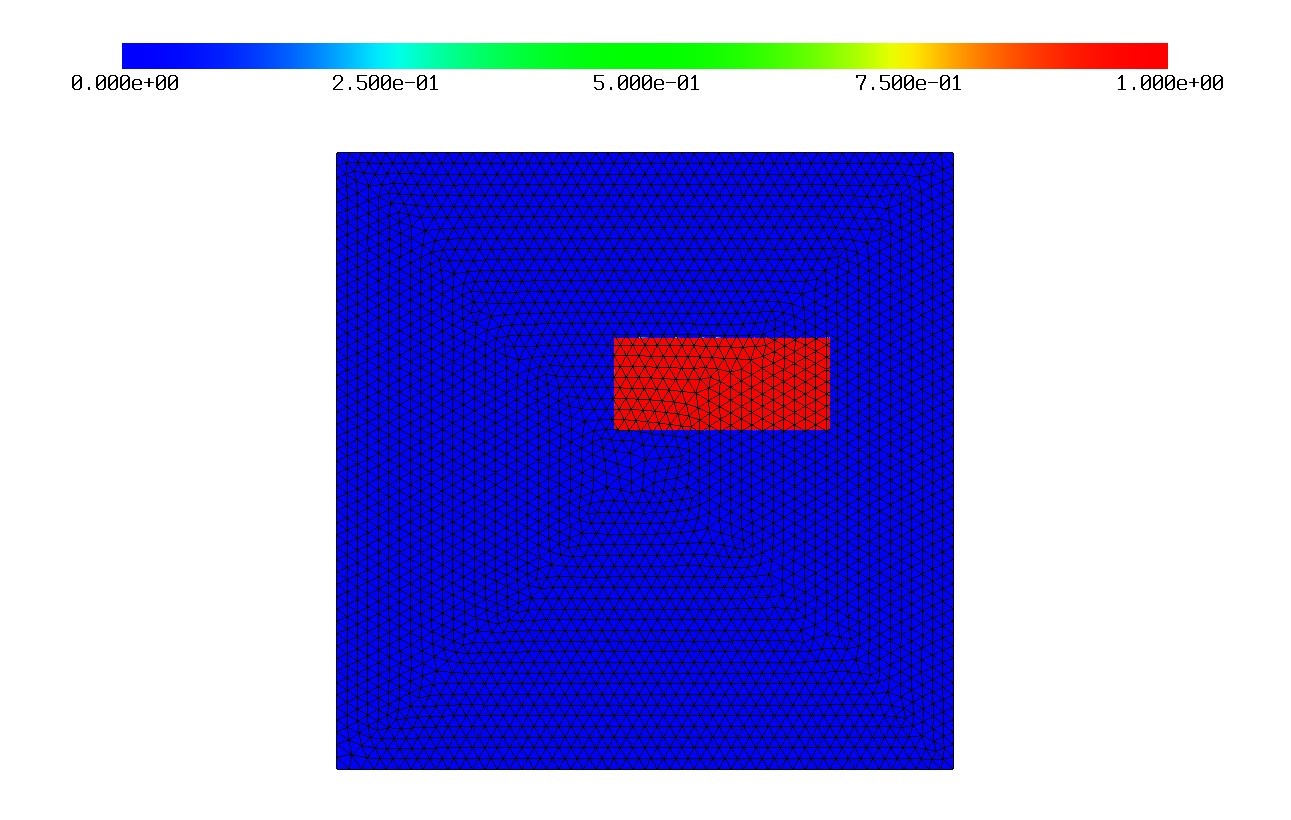}}
  \subfigure[reconstructed $\omega$ without noise]{
  \includegraphics[width=2.5in]{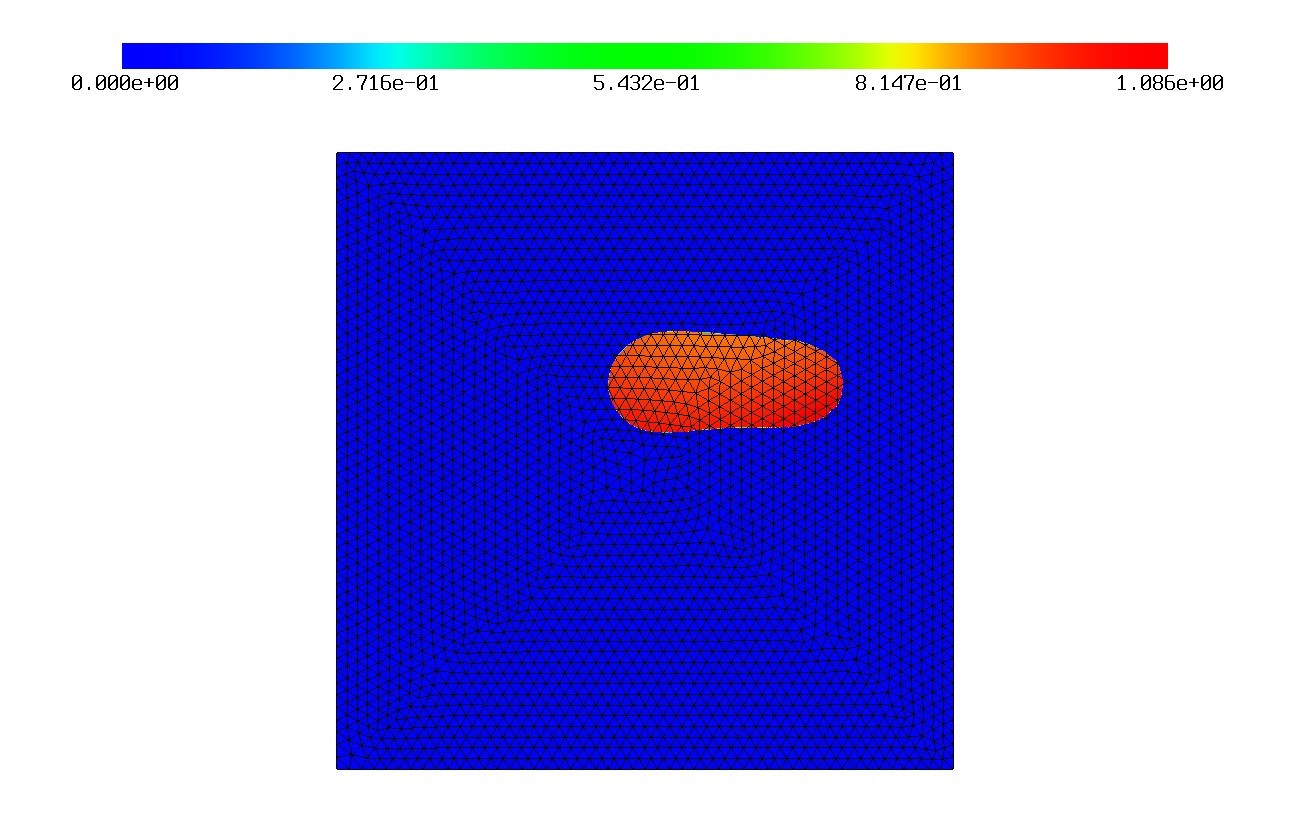}}\hspace{10mm}
  \subfigure[reconstructed $\omega$ with  $10\%$  noise]{
  \includegraphics[width=2.5in]{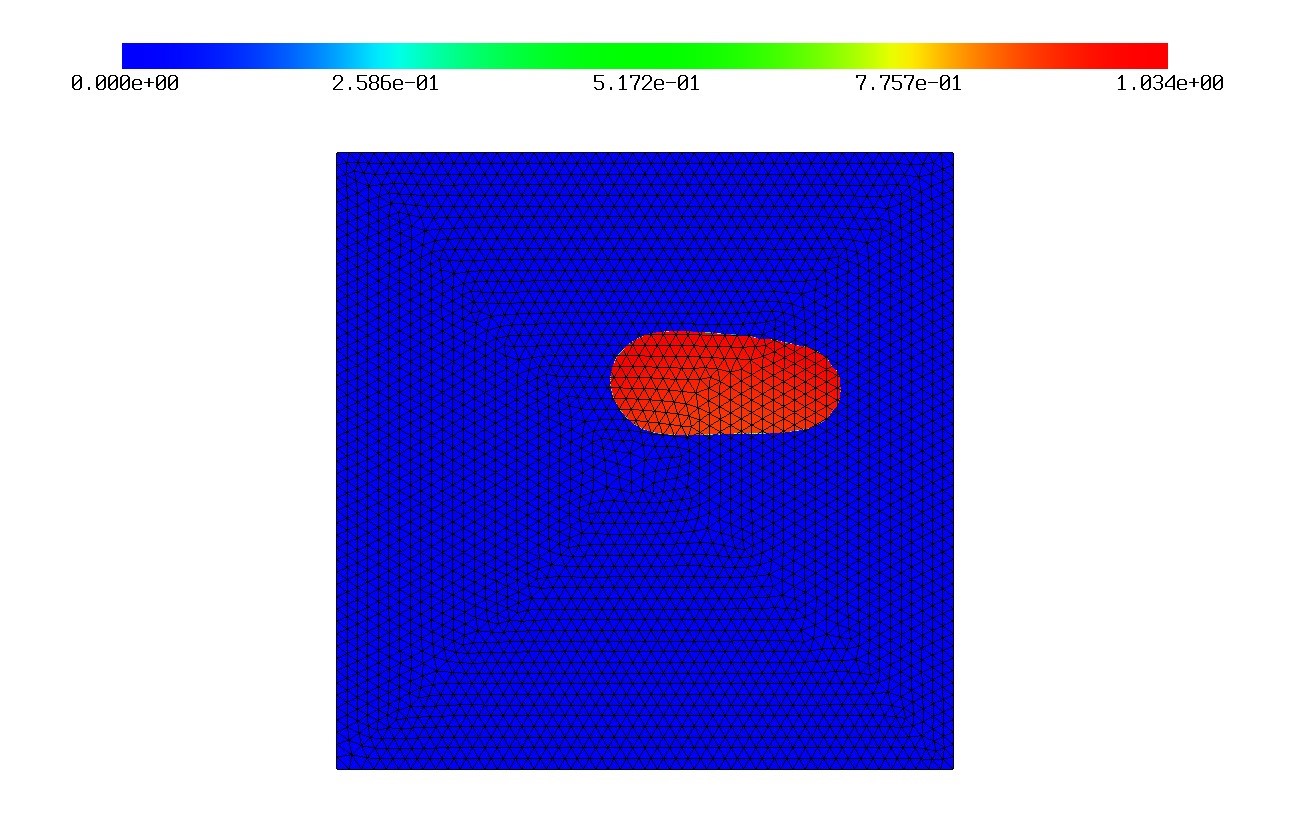}}
  \caption{Numerical results for Example \ref{e2} with different noise levels.}
  \label{fig2}
\end{figure}

We choose the initial domain as $\omega_0 = \{(x,y) \in \Omega: x^2 + y^2 < 0.04 \}$ with the same mesh size and finite element space as in Example \ref{e1}. The value of $\mbox{err}(q)$ is approximately $1.60 \times 10^{-2}$, and the reconstructed support and intensity are presented in Figure \ref{fig2}. We observe that for polygonal domains, the algorithm loses accuracy near the corners but results in a smoothed shape. This phenomenon is quite common in shape reconstruction algorithms and can be mitigated by using either the so-called $W^{1,\infty}$-velocity or  finer finite element meshes (cf. \cite{Deckelnick}).
\end{Example}

\begin{Example}\label{e3}
In the third example we consider a non-constant source, set $u_n = 1$ on $\Gamma$, $f = 1$ in $\Omega$, and the exact source function $q_e = \exp(-\frac{\sqrt{2}}{2} x) + \exp(-\frac{\sqrt{2}}{2} y)$. The exact domain $\omega_e$ is defined as $\{(x,y) \in \Omega: 36x^2 + 100 y^2 < 9 \}$. The data $u_d$ is computed in the same manner as above. 

We choose the initial domain as $\omega_0 = \{(x,y) \in \Omega: (x+0.3)^2 + (y+0.3)^2 < 0.04 \}$ with the same mesh size and finite element space as above. Figures \ref{fig31} and \ref{fig32} illustrate the convergence history of the objective functional and the reconstructed support and strength under different noise levels.

\begin{figure}[!htbp]
  \centering
  \subfigure[objective  functional  $J$]{
  \includegraphics[width=3in]{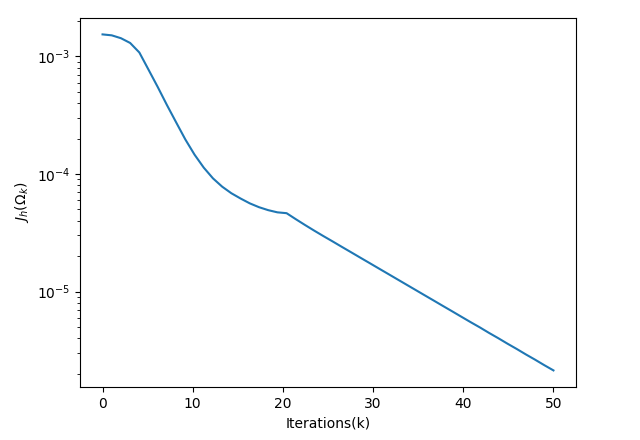}}
  \subfigure[$\mbox{err}(q)$]{
  \includegraphics[width=3in]{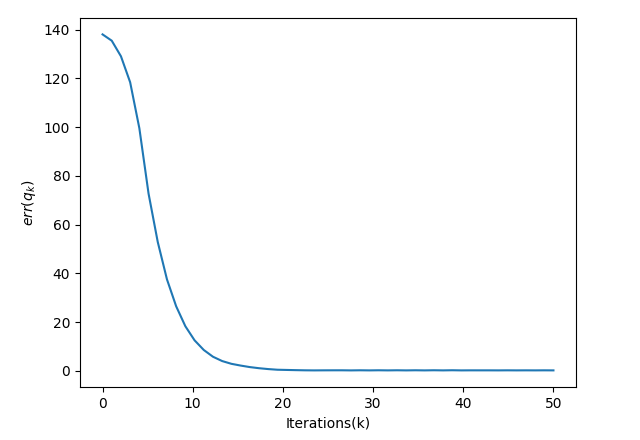}}
  \caption{Objective functional value and strength error for Example \ref{e3}.}
  \label{fig31}
\end{figure}

\begin{figure}[!htbp]
  \centering
  \subfigure[initial domain  $\omega_0$]{
  \includegraphics[width=2.5in]{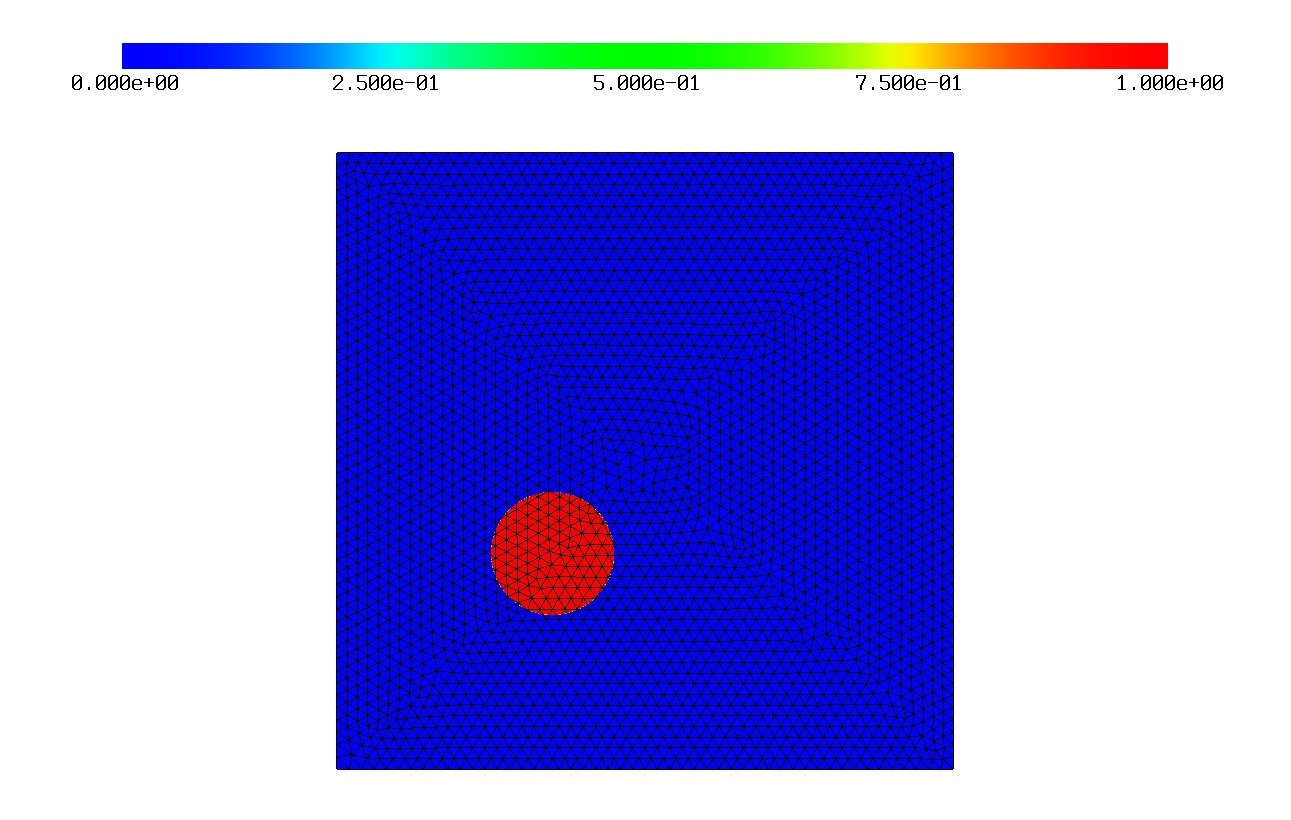}}\hspace{10mm}
  \subfigure[true source value]{
  \includegraphics[width=2.5in]{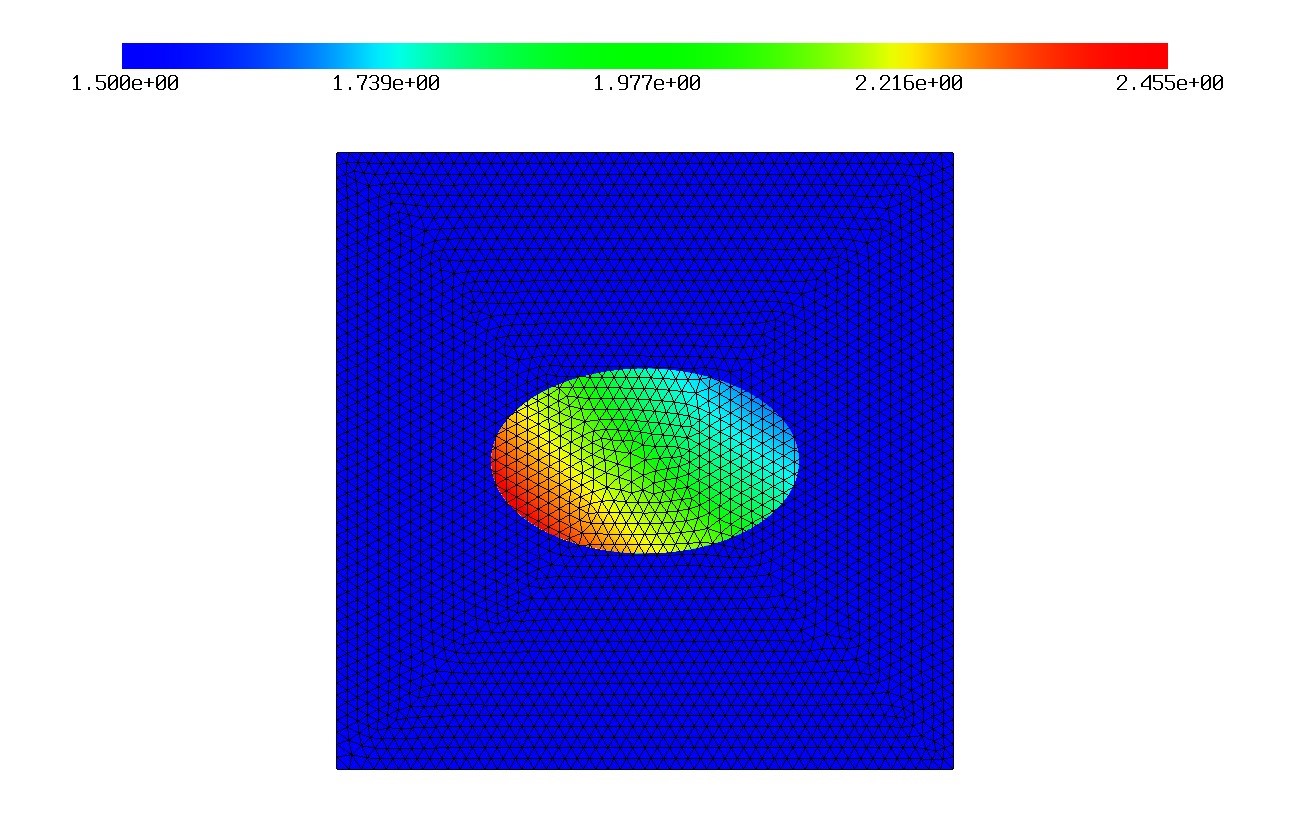}}
  \subfigure[reconstructed $\omega$ without noise]{
  \includegraphics[width=2.5in]{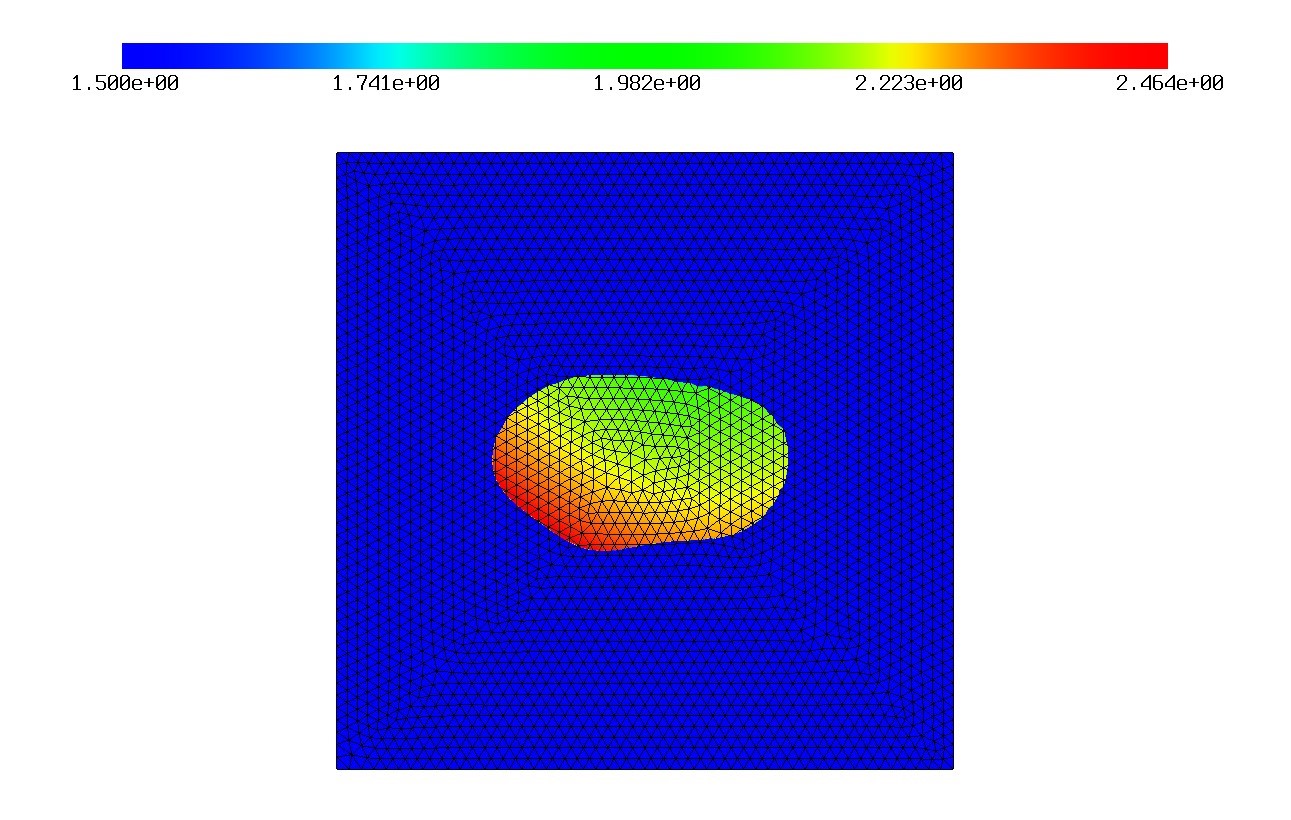}}\hspace{10mm}
  \subfigure[reconstructed $\omega$ with  $10\%$  noise]{
  \includegraphics[width=2.5in]{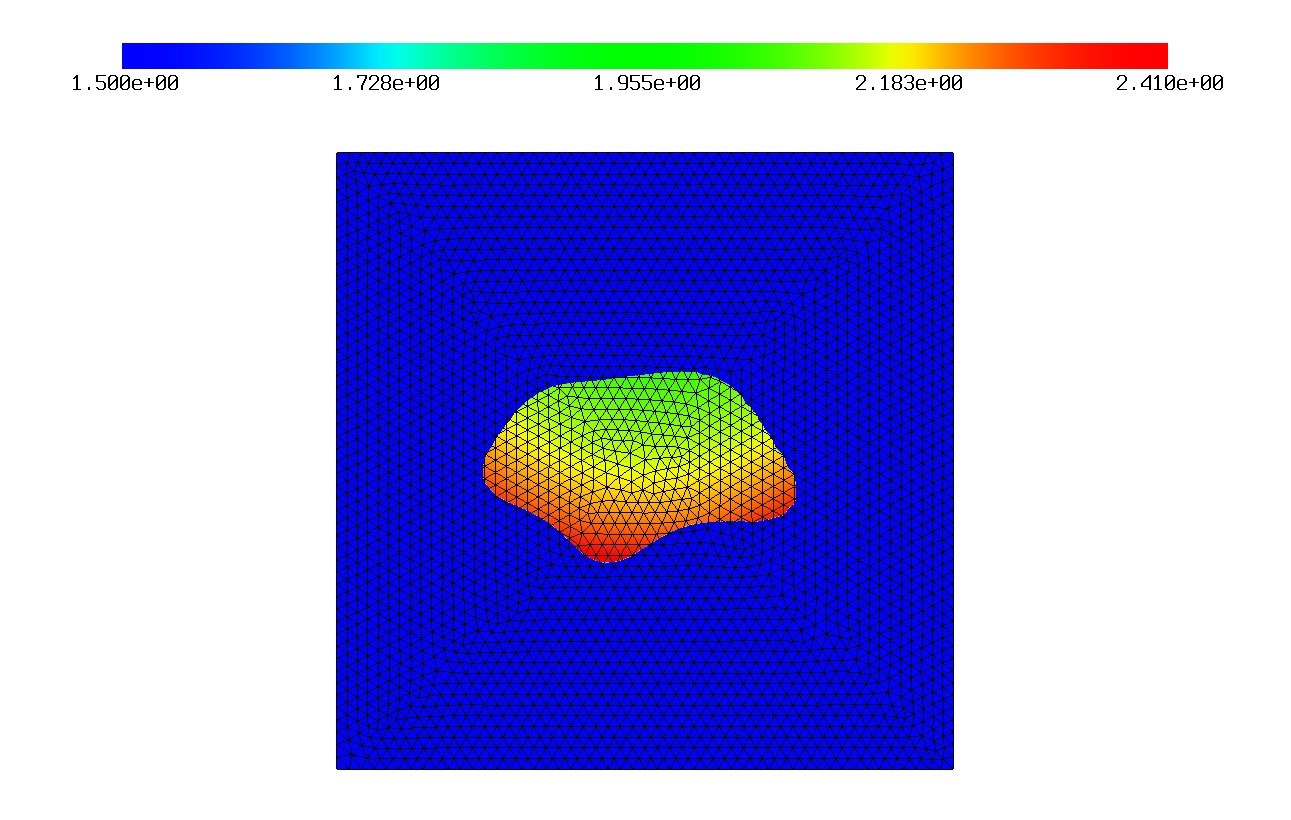}}
  \caption{Numerical results for Example \ref{e3} with different noise levels.}
  \label{fig32}
\end{figure}

In this example, we observe that the reconstructed results are sensitive to the Dirichlet boundary value. Any changes in support can lead to a significant variation in intensity. For the noise-free case, we obtain $\mbox{err}(q) = 1.50 \times 10^{-1}$.  However, when considering noise, the shape deviates significantly from the true position.
\end{Example}

\begin{Example}\label{e4}
In the fourth example we set $u_n = \sin(\pi x)\sin(\pi y)$ on $\Gamma$, $f = 1$ in $\Omega$, and the exact source function $q_e = 2x(1-x)+2y(1-y)$. The exact domain $\omega_e$ is defined as $\{(x,y) \in \Omega: x^2 + y^2 < 0.09 \}$. 

We choose the initial domain as $\omega_0 = \{(x,y) \in \Omega: (x-0.3)^2 + (y-0.3)^2 < 0.0225 \}$. The intensity error $\mbox{err}(q)$ reaches $5.31 \times 10^{-2}$, while the reconstructed support and strength are shown as in Figure \ref{fig4}, indicating a satisfactory result.

\begin{figure}[!htbp]
  \centering
  \subfigure[initial domain  $\omega_0$]{
  \includegraphics[width=2.5in]{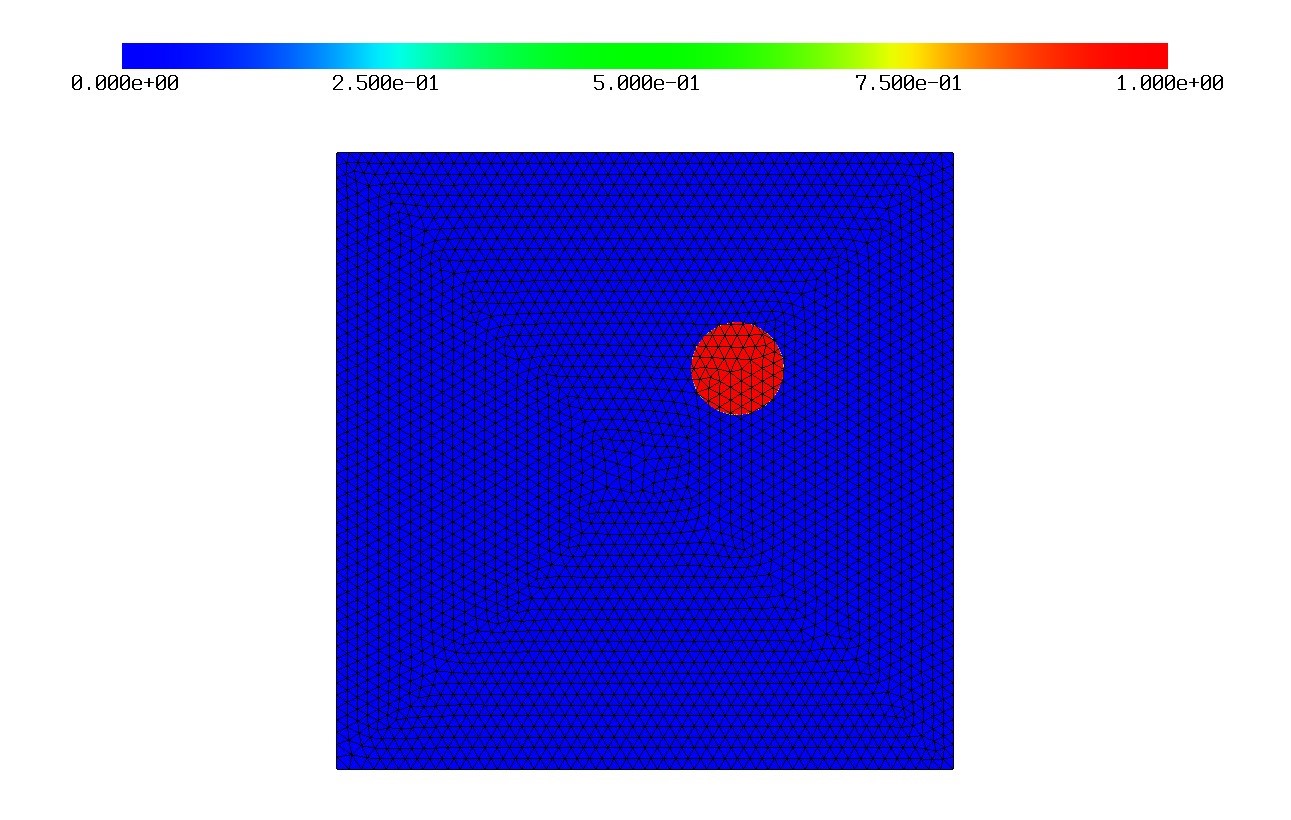}}\hspace{10mm}
  \subfigure[true source value]{
  \includegraphics[width=2.5in]{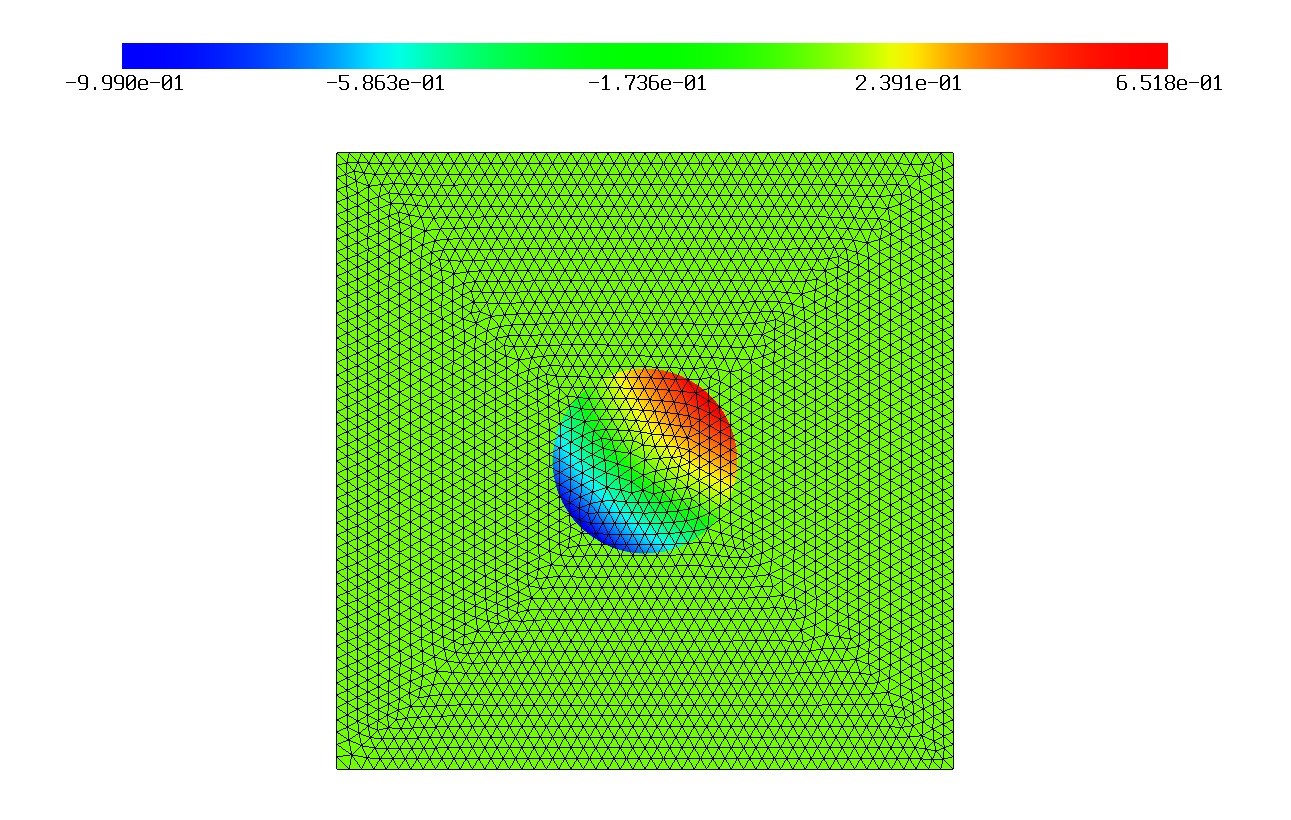}}
  \subfigure[reconstructed $\omega$ without noise]{
  \includegraphics[width=2.5in]{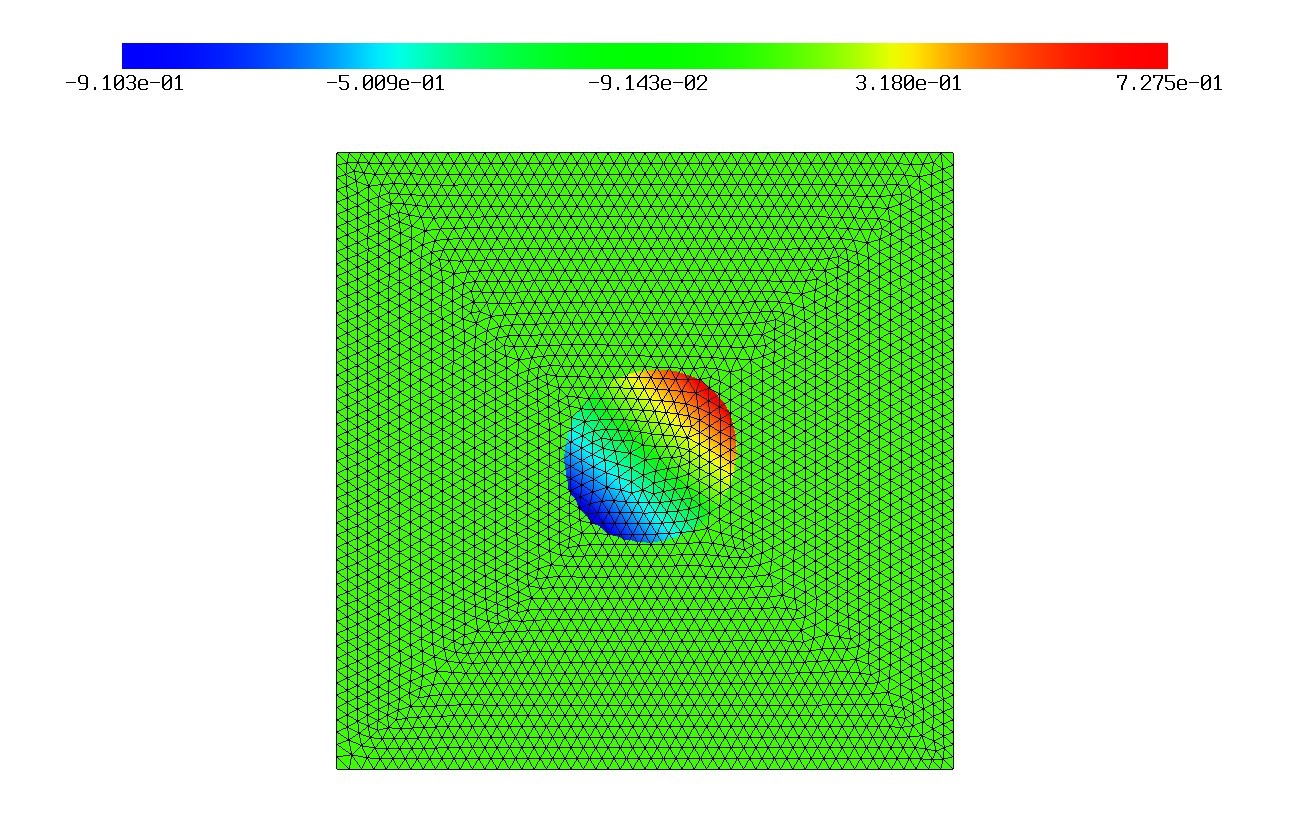}}\hspace{10mm}
  \subfigure[reconstructed $\omega$ with $10\%$ noise]{
  \includegraphics[width=2.5in]{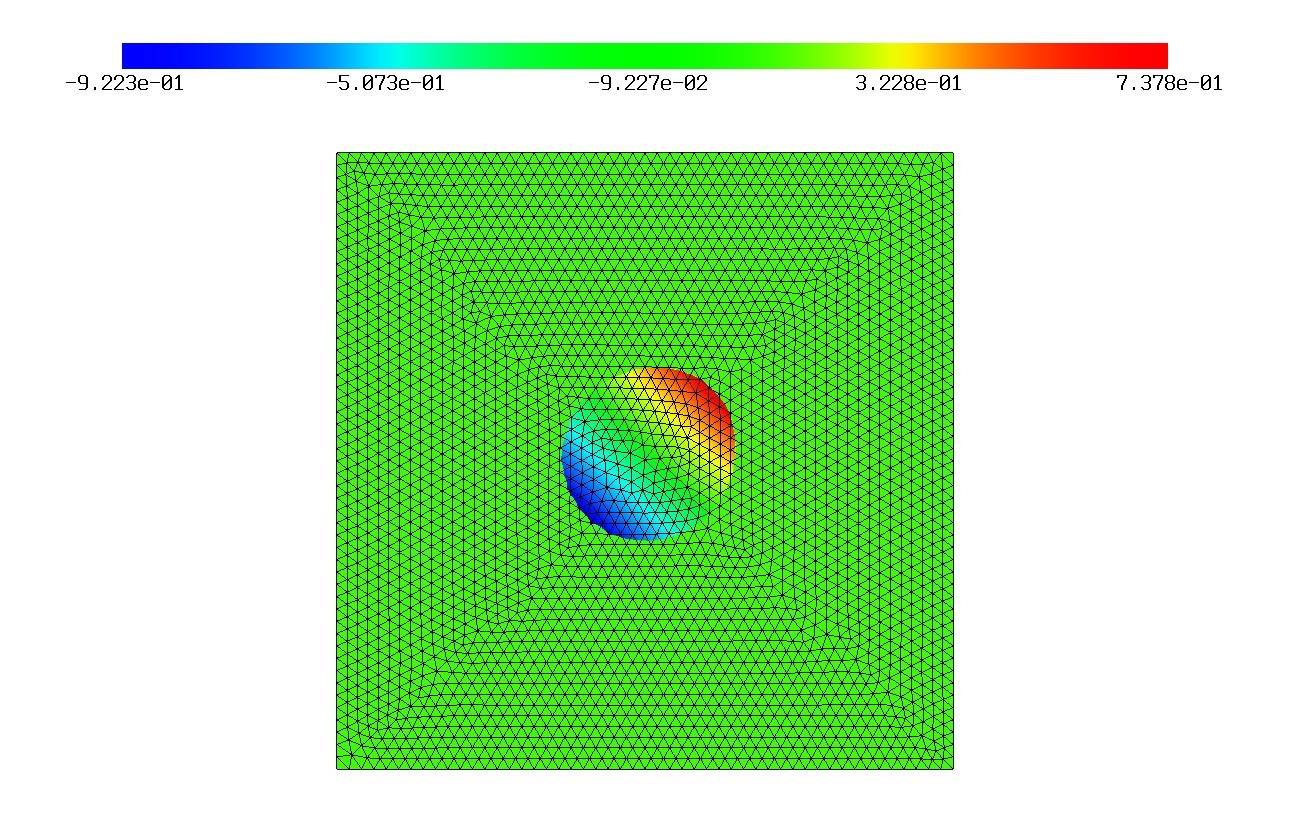}}
  \caption{Numerical results for Example \ref{e4} with different noise levels.}
  \label{fig4}
\end{figure}
\end{Example}

\begin{Example}\label{e5}
In this example, we aim to demonstrate the ability of the proposed algorithm to handle topological changes. We set $u_n = \sin(\pi x)\sin(\pi y)$ on $\Gamma$, $f = 1$ in $\Omega$, and define the exact source function $q_e = 1$ as well as the exact domain $\omega_e = \{(x,y) \in \Omega: (x\pm 0.45)^2 + (y\pm 0.45)^2 < 0.04 \}$. The data $u_d$ is computed in the same manner as above. 

\begin{figure}[!htbp]
  \centering
  \subfigure[initial domain $\omega_0$]{
  \includegraphics[width=2.5in]{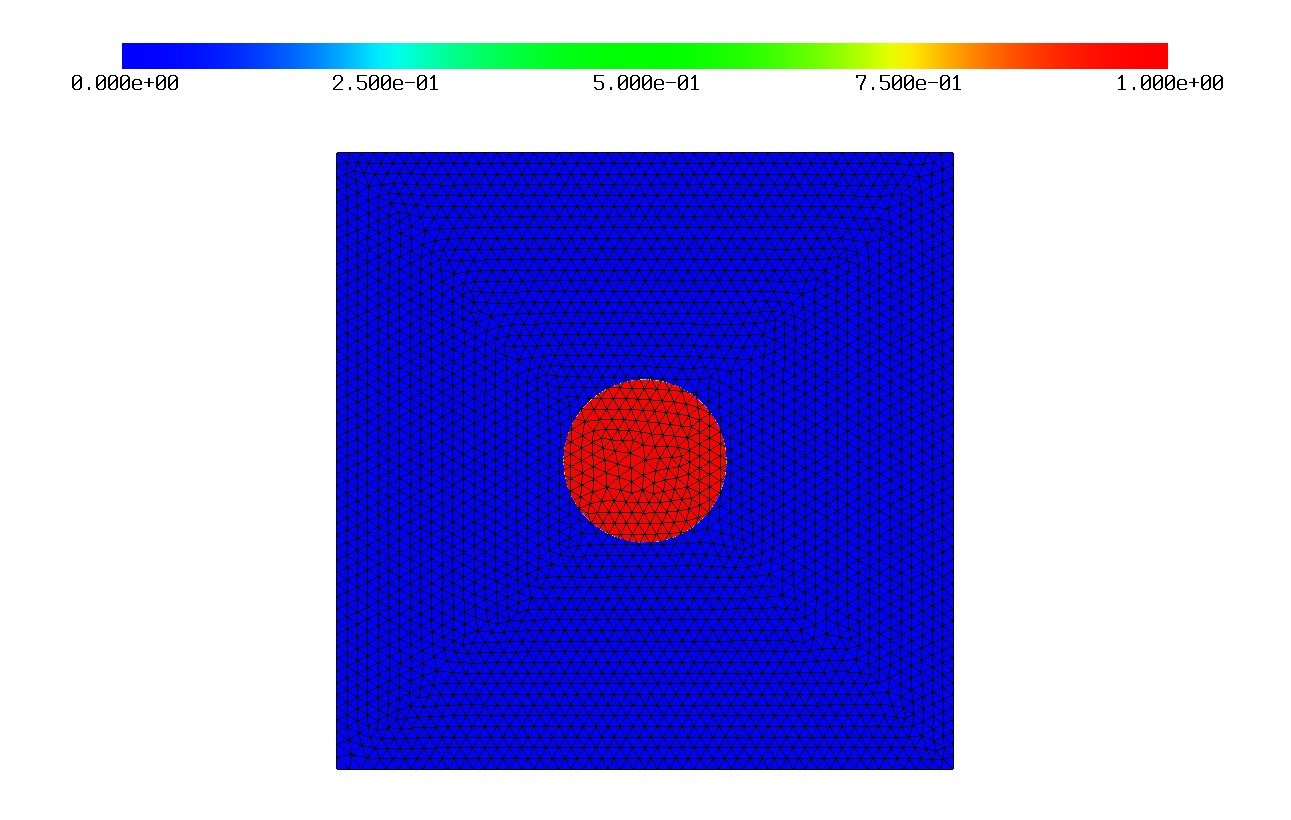}}\hspace{10mm}
  \subfigure[true source value]{
  \includegraphics[width=2.5in]{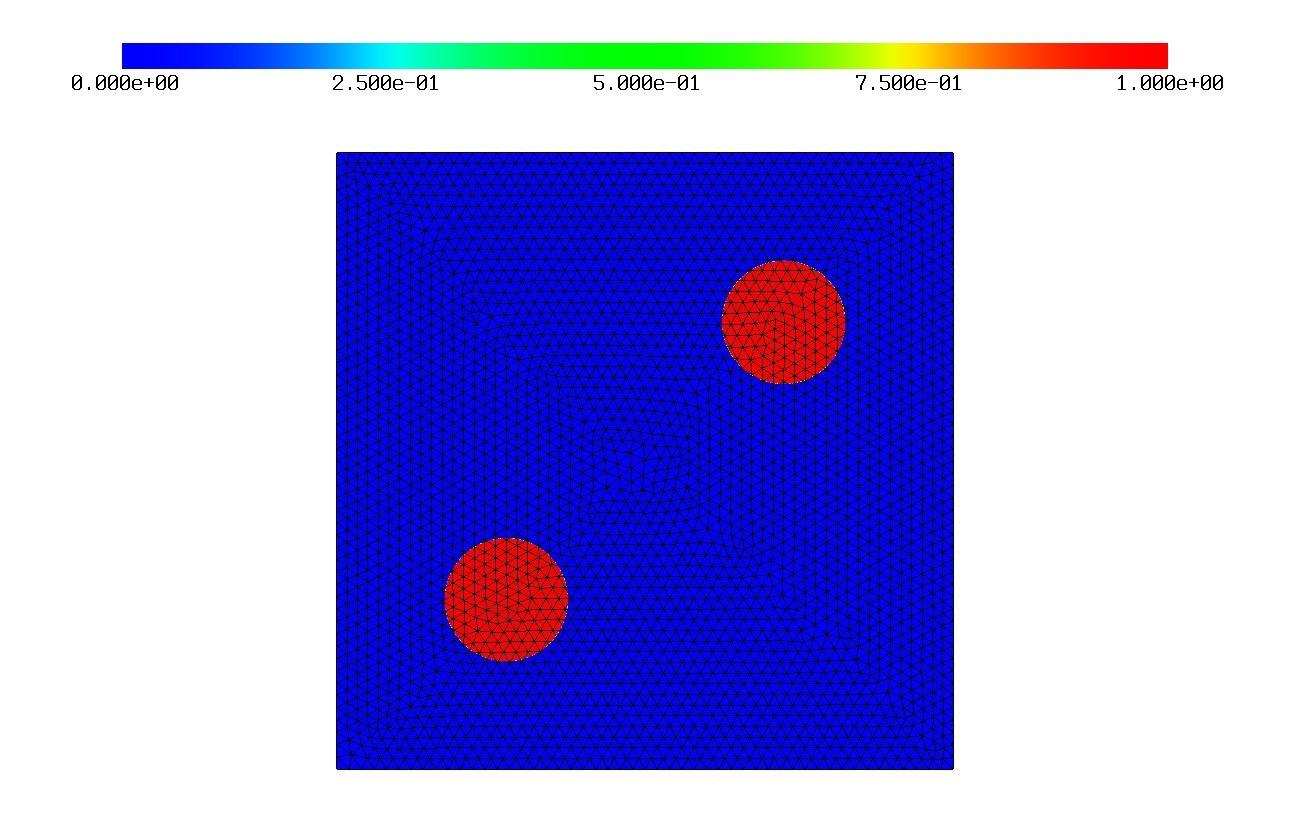}}
  \subfigure[reconstructed $\omega$ without noise]{
  \includegraphics[width=2.5in]{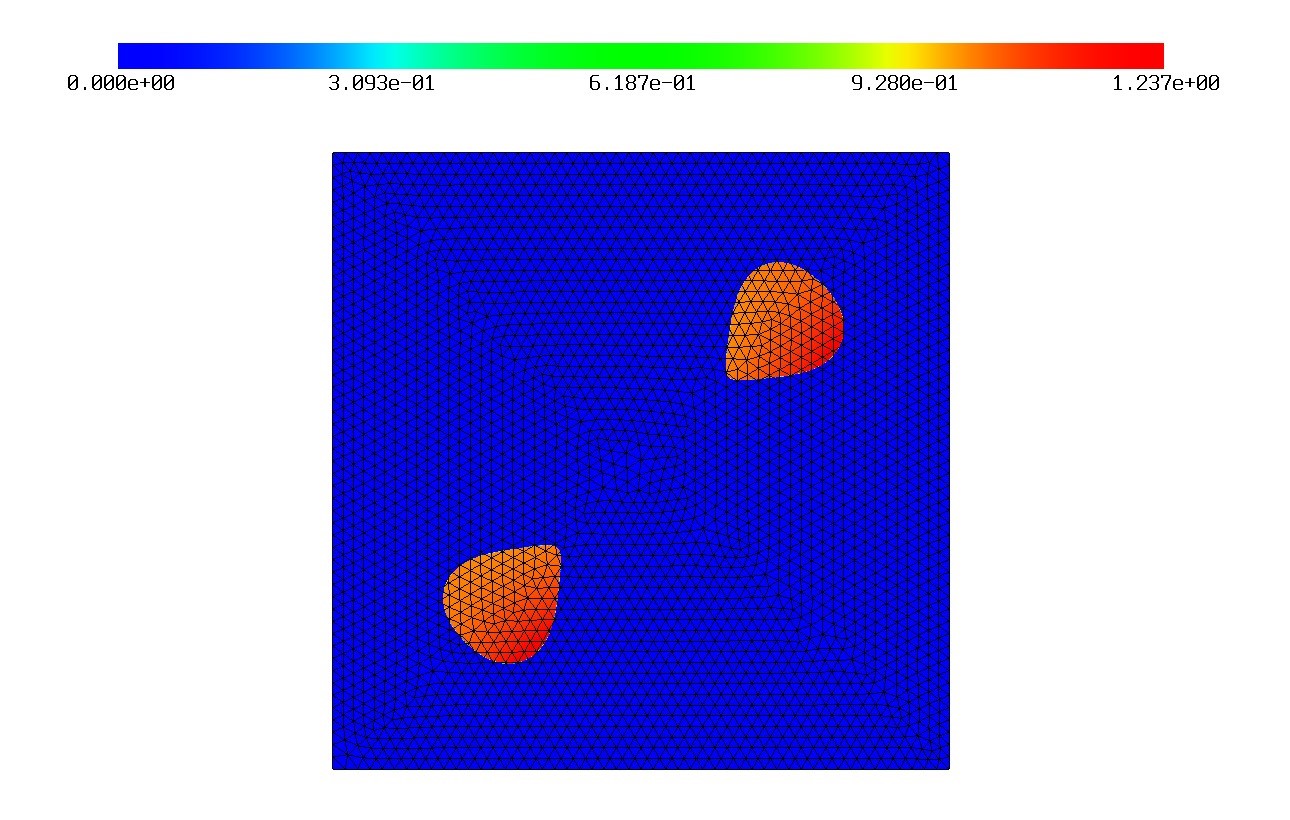}}\hspace{10mm}
  \subfigure[reconstructed $\omega$ with  $10\%$  noise]{
  \includegraphics[width=2.5in]{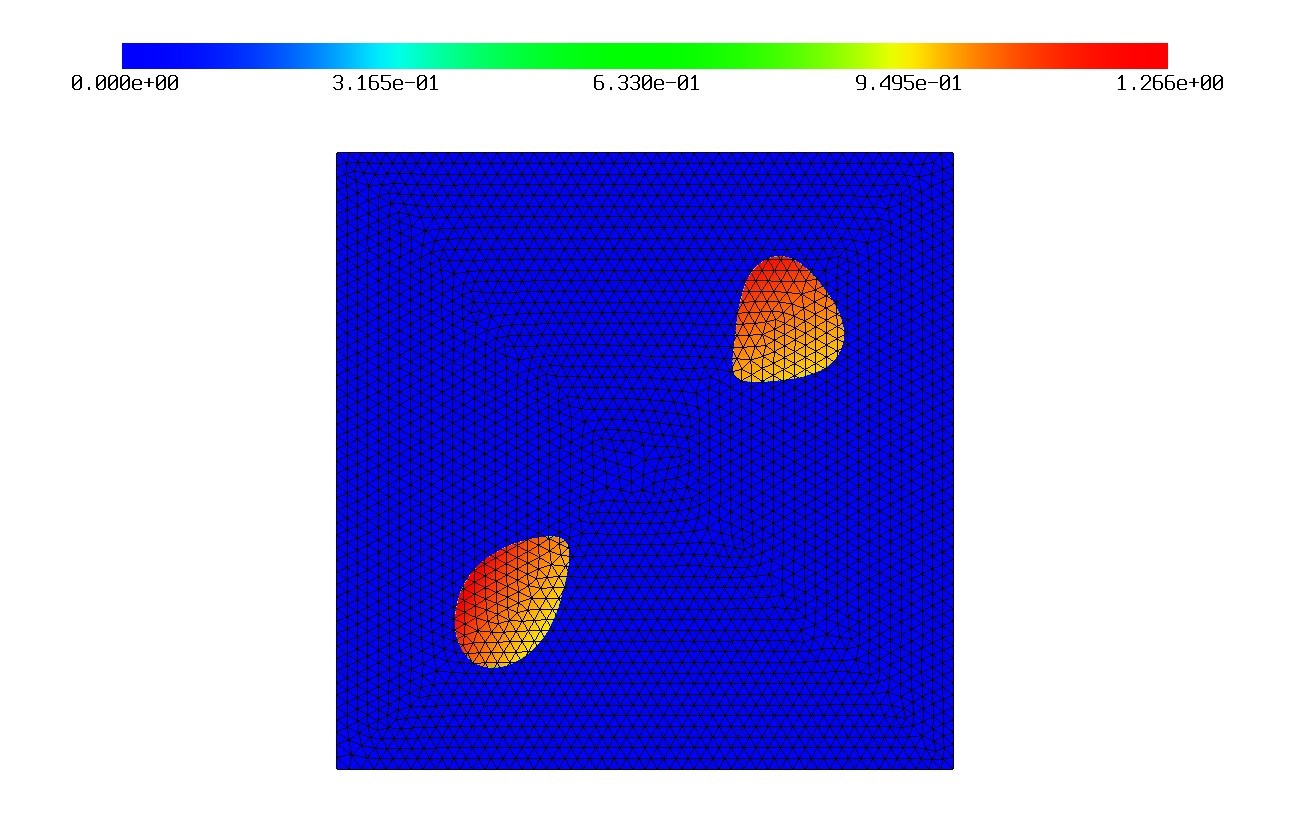}}
  \caption{Numerical results for Example \ref{e5} with different noise levels.}
  \label{fig5}
\end{figure}

We choose the initial domain as $\omega_0 = \{(x,y) \in \Omega: x^2 + y^2 < 0.07 \}$ with the same mesh size and finite element space as above. The intensity  error $\mbox{err}(q)$ reaches $4.26 \times 10^{-2}$. 
\end{Example}

\begin{Example}\label{e6}
In the final example, we demonstrate the proposed algorithm's ability to merge multi-connected domains, in contrast to the previous example where the domain was split. We set $u_n = \sin(\pi x)\sin(\pi y)$ on $\Gamma$, $f = 1$ in $\Omega$, with the exact source function $q_e = 2$ and the exact domain $\omega_e = \{(x,y) \in \Omega: x^2 + y^2 < 0.15 \}$. The data $u_d$ is computed in the same manner as above. 

\begin{figure}[!htbp]
  \centering
  \subfigure[initial domain $\omega_0$]{
  \includegraphics[width=2.5in]{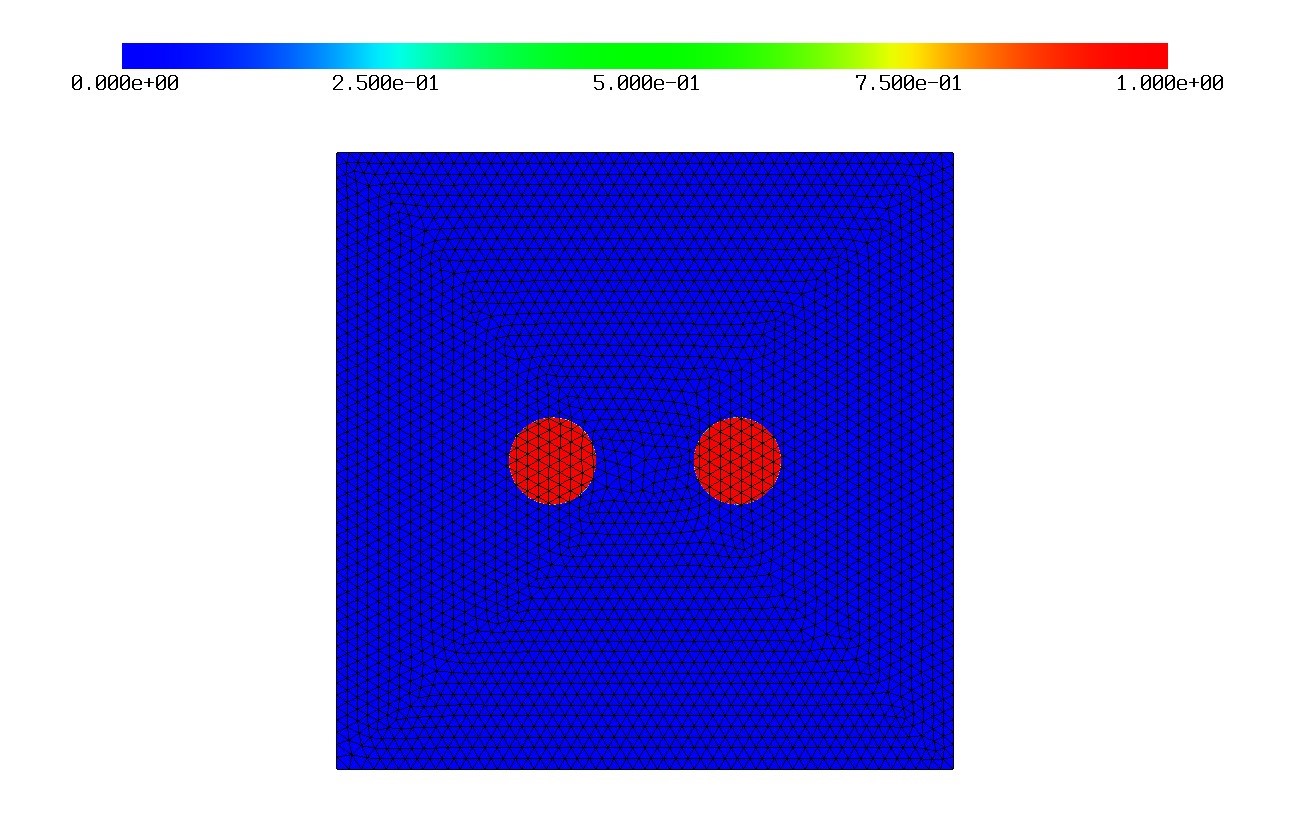}}\hspace{10mm}
  \subfigure[true source value]{
  \includegraphics[width=2.5in]{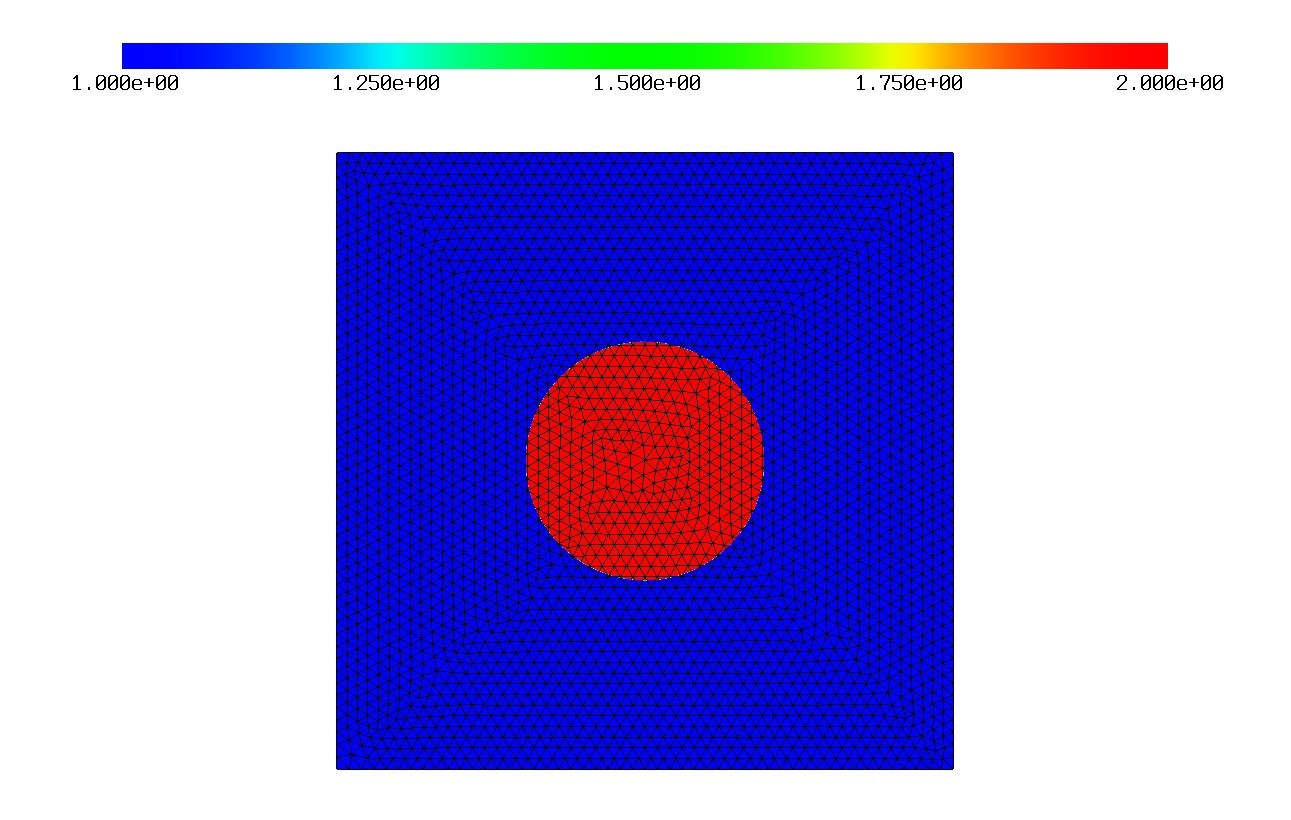}}
  \subfigure[reconstructed $\omega$ without noise]{
  \includegraphics[width=2.5in]{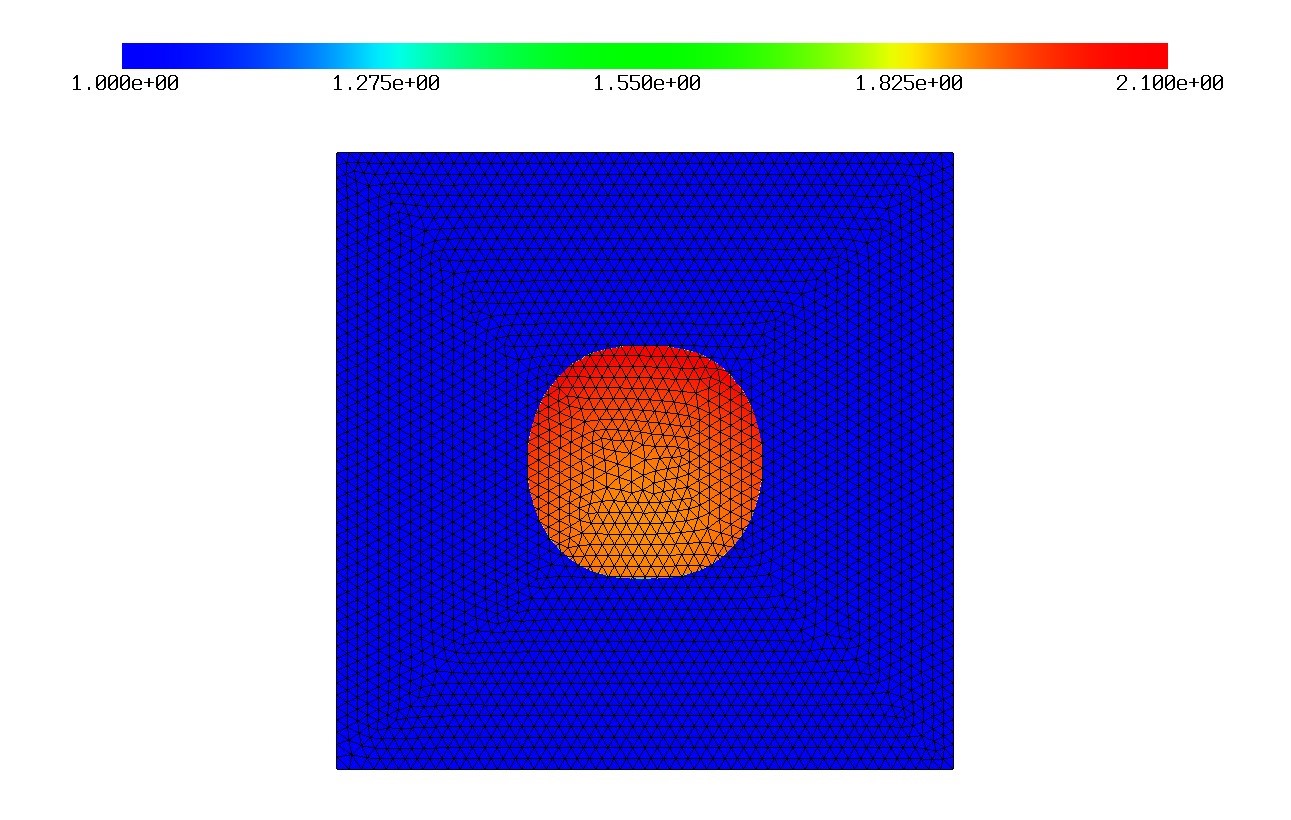}}\hspace{10mm}
  \subfigure[reconstructed $\omega$ with  $10\%$  noise]{
  \includegraphics[width=2.5in]{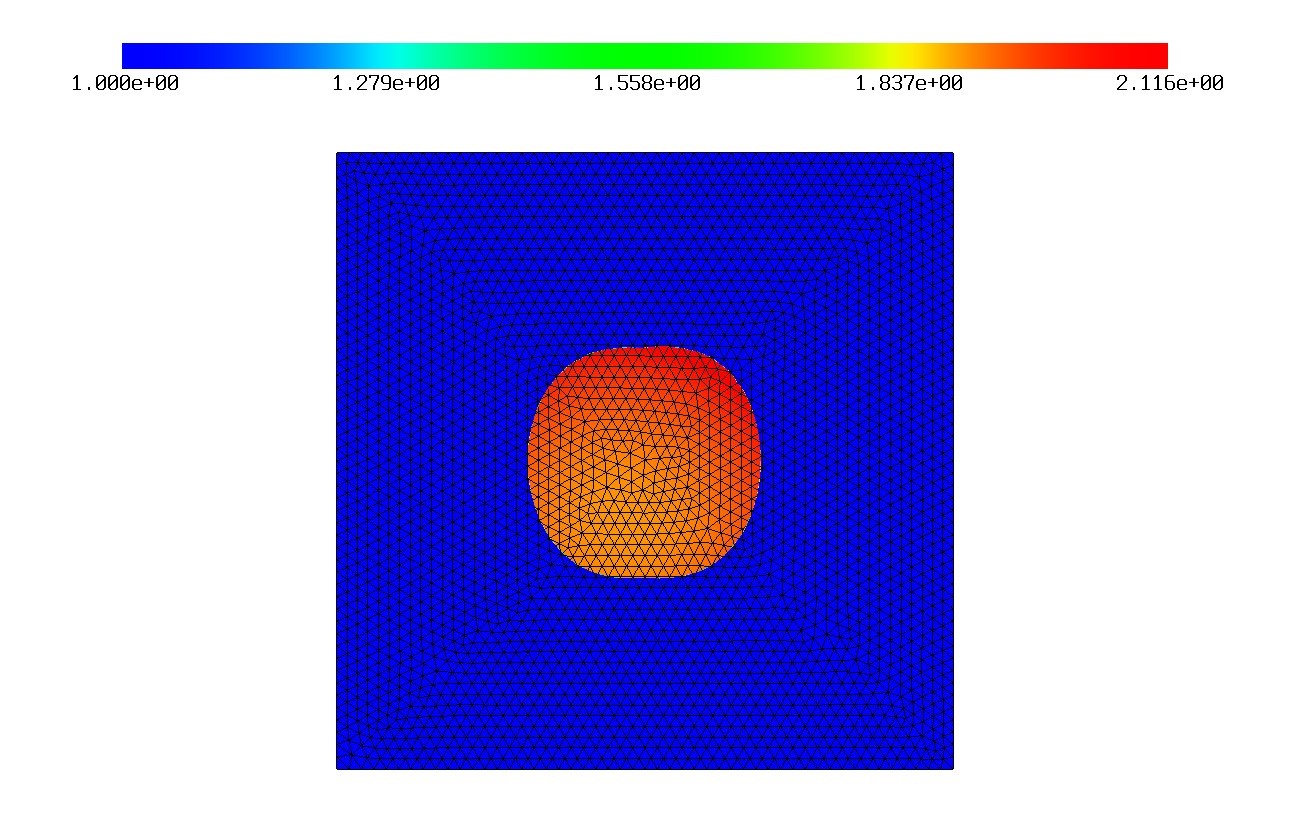}}
  \caption{Numerical results for Example \ref{e6} with different noise levels.}
  \label{fig6}
\end{figure}

We choose the initial domain as $\omega_0 = \{(x,y) \in \Omega: (x\pm 0.3)^2 + y^2 < 0.02 \}$ with the same mesh size and finite element space as above. The intensity  error $\mbox{err}(q)$ reaches $3.11 \times 10^{-2}$. When topology changes occur, the perimeter constraint becomes much more important and needs to be chosen carefully. Figure \ref{fig5} together with Figure \ref{fig6} illustrates that the algorithm can handle topology changes effectively for the source inversion.
\end{Example}

Based on the extensive numerical experiments conducted, we can draw several conclusions. Firstly, the proposed algorithm demonstrates satisfactory stability and accuracy, with noise having only a minimal impact on the results. Secondly, the algorithm is likely to fail if the initial domain is significantly far from the exact source support, consistent with the well-known sensitivity to the initial guess due to the non-convex and ill-posed nature of this type of inverse problem. Lastly, while the method performs well for constant sources, it does not always converge to an acceptable result for non-constant cases. When the source intensity varies significantly, treating the regularization parameter $\alpha$ as a variable may improve outcomes.

\section{Conclusion and perspective}
In this paper, we present a general shape optimization approach for identifying both the source strength and support. The crucial element of this approach is the Tikhonov regularization term in the least-squares formulation of the inverse problem, which enables the elimination of the source strength from the first-order optimality condition. We believe that the proposed approach can be extended in several directions.

First, we expect the approach to be adaptable to unsteady diffusion or subdiffusion equations, such as the heat equation or fractional diffusion equations. Second, we can incorporate pointwise constraints on the strength, e.g., $a\leq q(x)\leq b$ almost everywhere in $\omega$ for some constants $a$ and $b$. In this case, the first-order optimality condition (\ref{opt}) is replaced by
\begin{eqnarray}
J'_0(s-q)=(\alpha q+p,s-q)_\omega\geq 0\quad \forall s\in L^2( \omega), \ a\leq s(x)\leq b\ \mbox{a.e.\ in}\ \omega,\label{opt_constraint} 
\end{eqnarray}
which leads to the relation $q=\max\{a,\min\{-\frac{1}{\alpha}p,b\}\}$ in $\omega$. The state system of the resulting shape optimization problem (\ref{shape_obj})-(\ref{shape_state}) is not differentiable due to the non-smooth structure of $q$. However, this can be managed by employing smoothing techniques and then passing to the limit (cf. \cite{LuftSchulzWelker}).  Lastly, we anticipate extending the approach to recover potentials described by the equation 
\begin{eqnarray}
-\Delta u+\chi_\omega qu=f\quad\mbox{in}\ \Omega,
\end{eqnarray}
where both $\omega$ and $q$ are unknown. This type of problem has been extensively studied in the literature, typically under the assumption that the support $\omega$ is either known a priori or equal to the entire spatial domain (cf. \cite{JinLuQuanZhou}), or that the strength $q$ is known while the goal is to recover the support $\omega$ (cf. \cite{HettlichRundell1997}).


 \medskip

\end{document}